\crefname{hypothesis}{Hypothesis}{Hypotheses}
\title{Randomized algorithms for streaming low-rank approximation in tree tensor network format}
\author{Alberto Bucci\thanks{Faculty of Mathematics and Physics, Charles University, Sokolovská 83, Prague, 186 75, CZ
  (\email{alberto.bucci@matfyz.cuni.cz}).}\and Gianfranco Verzella\thanks{Section of Mathematics, University of Geneva, CH-1205 Geneva, Switzerland} (\email{gianfranco.verzella@unige.ch})}
\renewcommand{\hat}{\widehat}
\pgfplotsset{compat=1.18}
\begin{document}
\nolinenumbers
\maketitle
% REQUIRED
\begin{abstract}
In this work, we present the tree tensor network Nyström (TTNN), an algorithm that extends recent research on streamable tensor approximation, such as for Tucker and tensor-train formats, to the more general tree tensor network format, enabling a unified treatment of various existing methods. Our method retains the key features of the generalized Nyström approximation for matrices, that is randomized, single-pass, streamable, and cost-effective. Additionally, the structure of the sketching allows for parallel implementation. We provide a deterministic error bound for the algorithm and, in the specific case of Gaussian dimension reduction maps, also a probabilistic one. We also introduce a sequential variant of the algorithm, referred to as sequential tree tensor network Nyström (STTNN), which offers better performance for dense tensors. Furthermore, both algorithms are well-suited for the recompression or rounding of tensors in the tree tensor network format. Numerical experiments highlight the efficiency and effectiveness of the proposed methods.
\end{abstract}

% REQUIRED
\begin{keywords}
Low-rank approximation, Nyström method, randomized linear algebra, tree tensor network, streaming algorithm
\end{keywords}

% REQUIRED
\begin{MSCcodes}
15A69, % Multilinear algebra, tensor calculus
65F55 % Numerical methods for low-rank matrix approximation; matrix compression
68W20 % Randomized algorithm
\end{MSCcodes}

 % Required for inserting images

 \section{Introduction}

Low-rank approximation has long been a cornerstone of numerical linear algebra. For instance, the singular value decomposition (SVD) enables optimal low-rank approximations of matrices by capturing their most significant singular components. 
Extending these ideas to multilinear arrays, or tensors, presents additional challenges due to the exponential growth of storage and computational costs with increasing dimensions. Various low-rank tensor decomposition methods, such as the Tucker format \cite{tensordecomp}, tensor-train (TT) format \cite{oseledets2011tensor}, and hierarchical Tucker (HT) format \cite{grasedyck2010hierarchical}, have been developed to mitigate this so-called curse of dimensionality. 

In this work, we focus on the more general tree tensor network (TTN) \cite{shi2006classical} format, which extends these classical tensor decompositions and has proven highly effective for various applications, including quantum chemical simulations \cite{murg2010simulating,murg2015tree}, dynamical low-rank approximation \cite{ceruti2021time, ceruti2023rank}, modeling quantum many-body systems with disorder \cite{lin2017griffiths}, and information science \cite{dumitrescu2017tree, cheng2019tree}.

This paper proposes and analyzes two randomized algorithms for the streaming low-rank approximation of tensors in the TTN format: the tree tensor network Nyström method (TTNN) and its sequential variant, the sequential tree tensor network Nyström (STTNN).

The foundation for developing randomized low-rank approximation algorithms is undoubtedly the randomized singular value decomposition, often referred to as the Halko-Martinsson-Tropp (HMT) method \cite{halko2011finding}.
Given a matrix $A\in \mathbb{R}^{m\times n}$, the algorithm proceeds by first drawing a random dimension reduction matrix (DRM) $X\in\mathbb{R}^{n\times r}$, with $r\ll n$, then by computing the product $AX$ and orthogonalizing its columns with the QR method which we denote by $Q = \mathrm{orth}(AX)$, and finally forming the rank $r$ approximation $Q(Q^TA)$. Many algorithms for tensor decomposition such as the higher-order SVD (HOSVD) \cite{hosvd}, the sequentially truncated higher-order SVD (STHOSVD)\cite{sthosvd}, and the tensor-train SVD (TT-SVD) \cite{oseledets2011tensor} have been greatly refined through the application of these randomized techniques \cite{Saibaba_Minster_Arvind, R-ST-HOSVD, rounding_TT, hashemi2023rtsms} and much of the current research is centered on the probabilistic analysis of these algorithms. In particular, the community has developed several tools and methodologies for conducting these analyses, providing rigorous guarantees of performance, accuracy, and stability \cite{userfriendly, Vershnynin_book,tropp2011improved,krahmer2011new,chen2024near, nakatsukasa2020fast}.

While these algorithms are highly effective, their reliance on the HMT framework necessitates at least two passes over the input data, making them unsuitable for streaming applications \cite{clarkson2009numerical}.

For matrices, the generalized Nyström method (GN) \cite{nakatsukasa2020fast, clarkson2009numerical,tropp2017practical} addresses this limitation. The algorithm draws two DRMs $X\in\mathbb{R}^{n\times r}$ and $Y\in\mathbb{R}^{m\times (r+ p)}$ and returns the rank $r$ approximant $AX(Y^TAX)^\dagger Y^TA$, where $\dagger$ denotes the Moore-Penrose pseudoinverse \footnote{In practice, the pseudoinverse is never computed, but an equivalent least-square problem is solved.}. The GN method achieves streamability because all the sketches depend linearly on $A$, which constitutes the primary computational expense. Furthermore, these sketches can be computed with just a single pass through the data. 
Building on the GN method, several approaches have been developed for streaming low-rank approximation tailored to specific matrix or tensor formats. These include
the one-pass sketch and low-rank recovery \cite{Another_Tucker}, the multilinear Nyström (MLN) \cite{bucci2024multilinear}, the sequential multilinear Nyström (SMLN) \cite{bucci2024sequential}, the TT-rounding two-sided-randomization \cite{rounding_TT}, and the streaming tensor-train approximation (STTA) \cite{kressner2023streaming}. 
To the best of our knowledge, no method with these properties has been proposed for the hierarchical format. This format is particularly advantageous due to its ability to achieve greater data compression than other formats, as it naturally adapts to the structure of the data. This adaptability results in approximations with lower ranks \cite{grasedyck2010hierarchical,ceruti2023rank}, enabling the efficient handling of high-dimensional tensors, which are commonly encountered in practical applications across physics, chemistry, biology, and mathematics.

 Additionally, our algorithms generalize and integrate all the aforementioned techniques into a unifying framework.

The structure of the paper is as follows. Section \ref{sec:randomized matrix} provides a recap of standard randomized techniques for the low-rank approximation of matrices, offering the foundational concepts necessary for the development and analysis of our methods. Section \ref{sec:TTN} introduces the TTN format, emphasizing its connections to other tensor formats such as Tucker and TT.
In section \ref{sec:TTNN}, we present the TTNN method, followed by section \ref{sec:STTNN}, where we describe its sequential variant. Section \ref{sec:error analysis} focuses on the error analysis of both methods. In particular, in section \ref{sec:deterministic}, a deterministic analysis of TTNN is provided, deriving general upper bounds for the accuracy that apply to any sketching. Specializing on Gaussian random DRMs, in section \ref{sec:probabilistic}, we derive an upper bound on the expected error of the TTNN method. Finally, in section \ref{sec:sequential deterministic}, a deterministic error bound for STTNN is furnished. 
In section \ref{sec:khatri-rao}, we explain how to use our algorithms to perform the rounding of a tensor in TTN format, by exploiting structured DRMs. 
In section \ref{sec:experiments}, we present numerical experiments, demonstrating the application of the algorithms for approximating tensors and their use for rounding. Finally, we conclude in section \ref{sec:conclusions} with a summary of our findings and a discussion of potential applications and future developments.

\section{Randomized matrix low-rank approximation}
\label{sec:randomized matrix}

The analysis of the tree tensor network Nyström method (TTNN) is based on results from the matrix case, which are briefly reviewed in this section. More specifically, we consider the approximant obtained by the HMT scheme from \cite{halko2011finding}
and the GN scheme from \cite{nakatsukasa2020fast}.

Given a matrix $A$ of size $m\times n$ the approximants obtained by HMT and GN methods are given respectively by
\[
    \hat{A}_{HMT} = Q (Q^T A)\quad \text{and} \quad \hat{A}_{GN} = AX (Y^T A X)^{\dagger} Y^T A,
\]
where $Q = \mathrm{orth}(AX)$, $X\in \mathbb{R}^{n\times r}$, $Y \in \mathbb{R}^{m\times (r+p)}$ are two DRM matrices, $r$ is the rank of the approximants and $p$ is an oversample parameter that improves accuracy and stability \cite{nakatsukasa2020fast}.

For any \(\hat{r} < r\), we denote by \(E_{SVD} := \|A - A_{\hat{r}}\|_F\) the Frobenius norm error of the best rank \(\hat{r}\) approximation \(A_{\hat{r}}\) of \(A\). According to the Eckart-Young theorem, this error satisfies  $E_{SVD} = \sqrt{\sum_{s > \hat{r}} \sigma_s^2(A)}$,
where \(\sigma_s(A)\) represents the \(s\)-th singular value of \(A\). Similarly, we denote the Frobenius norm error of the HMT approximant by \(E_{HMT}\) and that of the GN approximant by \(E_{GN}\). Then we have the following
upper bounds \cite{nakatsukasa2020fast}
\begin{align} \label{eq: HMT_error_bound}
E_{HMT} &\leq E_{SVD} \|(\hat{V}_{\perp}^{T}X) (\hat{V}^T X)^\dagger\|_2,\\ \label{eq: GN_error_bound}
\quad E_{GN} &\leq E_{HMT}\|(Q_{\perp}^T Y)(Q^T Y)^\dagger\|_2,
\end{align}
where $\hat{V}$ is the orthogonal matrix with the first $\hat{r}$ right singular vectors of $A$, ${M}_\perp$ denotes an orthogonal complement of an orthogonal matrix ${M}$, and $\|\cdot\|_2$ denotes the 2-norm. 
Since $E_{SVD}$ is the optimal error that would be obtained by a truncated SVD, it is clear that it is important to choose the DRM in a way that makes the other terms as small as possible (with high probability). At the same time, we wish to maintain the cost of taking the matrix-vector products small, so it makes sense to use DRMs drawn from a set of structured matrices that have fast matrix-vector product routines available. A choice of random sampling that enables fast multiplication is that of sparse DRMs such as CountSketch matrices \cite{clarkson2017low}. This approach reduces the cost of forming $AX$ to $\mathcal{O}(\text{nnz}(A))$, where
$\text{nnz}(A)$ is the number of non-zero entries of $A$. Other options arise from subsampling trigonometric transforms. Examples include the
Subsampled Randomized Hadamard Transform (denoted by SRHT) \cite{tropp2011improved,boutsidis2013improved}
and the Subsampled Randomized Fourier Transform (SRFT) \cite{rokhlin2008fast}. These approaches reduce the 
cost of forming $AX$ to $\mathcal{O}(mn \log n)$, where $m$ is the number of rows of $A$, and $n$ is the number of 
rows of $X$. The theory for these transforms can be more complex than the one for more ``classical'' choices, such as Gaussian
matrices; the latter are deeply understood and have sharp error bounds available (see \cite{randomization_advantajes} and the references therein).
In particular, when the DRMs in the HMT and GN are Gaussian matrices we have the following bounds in expectation
\begin{align} \label{eq: HMT_error_bound_expecation}
\mathbb{E}[E_{HMT}] &\leq E_{SVD}\cdot\sqrt{1+\frac{\hat{r}}{r-\hat{r}-1}},\\ \label{eq: GN_error_bound_expectation}
\quad \mathbb{E}[E_{GN}] &\leq E_{SVD}\cdot\sqrt{1+\frac{\hat{r}}{r-\hat{r}-1}}\sqrt{1 + \frac{r}{p - 1}}. 
\end{align}
The use of GN has a few advantages with respect to the HMT scheme: 
it avoids costly orthogonalizations and can be used as a single-pass approximation
method. However, without proper implementation, the stability of GN can be cause for concern. The pseudocode in Algorithm~\ref{alg:GN}
reports the implementation suggested in \cite{nakatsukasa2020fast}.
\begin{algorithm}
\caption{Generalized Nystr\"om (GN)}\label{alg:GN}
\begin{algorithmic}[1]
    \Require $A\in \mathbb{R}^{m\times n}$, $r$ rank of the desired approximation, $p$ oversampling parameter,
    \Ensure Rank $r$ approximation of $A$.
    \State Draw two random sketchings $X\in \mathbb{R}^{n\times r}$ and $Y\in\mathbb{R}^{m\times (r+p)}$.
    \State Compute $AX$, $Y^TAX$, and $Y^TA$.
    \State Compute the economy-size QR decomposition $QR = Y^TAX$. 
    \State Return $(AXR^{-1})(Q^TY^TA)$.
\end{algorithmic}
\end{algorithm}
In practice, a slight oversample parameter $p$ makes this implementation stable, but no theoretical assessments have been conducted. While stability cannot be established
for GN as is, there is an inexpensive modification that guarantees stability
\begin{equation} \label{stabilized_nystrom}
\hat{A} = AX (Y^TAX)^{\dagger}_\epsilon Y^TA,
\end{equation}
which is the stabilized generalized Nyström (StabGN) method. Here $(Y^TAX)_\epsilon^\dagger$ denotes the $\epsilon$-pseudoinverse, that is if 
\[Y^TA X =\begin{bmatrix}
    U_1 & U_2
\end{bmatrix}\begin{bmatrix}
    \Sigma_1 & \\
    & \Sigma_2
\end{bmatrix}\begin{bmatrix}
    V_1 & V_2
\end{bmatrix}^T\]
is the SVD, where $\Sigma_1$ contains singular values larger than $\epsilon$, then $(Y^TAX)^\dagger_\epsilon = V_1 \Sigma_1^{-1} U_1^T$. Different strategies to implement StabGN in a numerically stable manner can be found in \cite{nakatsukasa2020fast}.

 \section{Tree tensor network}\label{sec:TTN}
A \textit{tensor} 
$\mathcal{T}\in\mathbb{R}^{n_1\times \dots \times n_d}$ is a $d$ dimensional array with entries
\[
t_{i_1i_2\dots i_d}, \quad 1\leq i_k \leq n_k, \quad  k \in D := \{1, \dots, d\}.
\]

\begin{defi}[mode-$k$ product]
The $k$-mode product of tensor $\mathcal{T}\in \mathbb{R}^{n_1\times \dots \times n_d}$ with tensor $\mathcal{U}\in \mathbb{R}^{n_k \times m_1\times \dots \times m_s}$, denoted $\mathcal{T}\times_k \,\mathcal{U}\in \mathbb{R}^{n_1\times \dots \times n_{k-1}\times m_1 \times \dots \times m_s\times n_{k+1}\times \dots \times n_{d}}$, is the contraction of the $k$th index of $\mathcal{T}$ and the first index of $\mathcal{U}$. Elementwise, we have
\[
(\mathcal{T}\times_{k}\mathcal{U})_{i_1,\dots, i_{k-1}, j_1,\dots, j_s, i_{k+1}, \dots, i_d} = \sum_{i_k=1}^{n_k} \mathcal{T}_{i_1,\dots ,i_d}\mathcal{U}_{i_k,j_1,\dots, j_s}.
\]
\end{defi}
Typically, the mode-$k$ product involves multiplying a tensor by a matrix along the $k$th mode. However, for our purposes, it is preferable to adopt this broader definition, that is the classical mode-$k$ product with a proper matricization of $\mathcal{U}$. We will also denote the contraction over the set of indices \( I=\{i_1,\dots, i_s\} \) of \( \mathcal{T} \) and the first $s$ indices of \( \mathcal{U} \) as \( \mathcal{T} \times_I \mathcal{U} \). It is important to note that when \( I = \{1, \dots, s\}\), this notation differs from \( \mathcal{T} \times_{i=1}^s \mathcal{U}_i \), where each mode of \( \mathcal{T} \) is contracted with the first index of \( \mathcal{U}_i \). Given a subset of indices $I\subset D$ let $n_I := \prod_{i\in I} n_i$ and $n_{D\backslash I} := \prod_{i\in D\backslash I} n_i$.

We will repeatedly use the unfolding operation, or matricization, which reshapes tensors into matrices. 
\begin{defi}[Matricization]    Consider a tensor $\mathcal{T}\in \mathbb{R}^{n_1\times \dots \times n_d}$. Let $I = \{\alpha_1, \dots, \alpha_k\}$ be a subset of its indices and let $D\backslash I = \{\beta_1, \dots, \beta_{d-k}\}$ be its complementary, both ordered in increasing order. The \textit{mode}-$I$ \textit{matricization} of $\mathcal{T}$, denoted by $\mathcal{T}^I\in \mathbb{R}^{n_I\times n_{D\backslash I}}$, satisfies

    \[
    {(\mathcal{T}^{I})}_{(i, j)} = t_{i_1,\dots, i_d},
    \]

    where
    \[
    i =  1 + \sum_{t=1}^k (i_{\alpha_t}-1)N_{\alpha_t}, \quad N_{\alpha_t} = \prod_{s=1}^{t-1} n_{\alpha_s}
    \]

    and
    \[
    j =  1 + \sum_{t=1}^{d-k} (i_{\beta_{t}}-1)M_{\beta_{t}}, \quad M_{\beta_{t}} = \prod_{s=1}^{t-1} n_{\beta_{s}}.
    \]
\end{defi}
Next definition is similar to the one of dimension tree given in \cite{grasedyck2010hierarchical}. 
\begin{defi}[index tree] \label{def:index_tree}
    Given a set of indices $D=\{1,\dots,d\}$, a family on nodes $\mathcal{I} = \{I_{\ell, k}\}$, for $\ell\in \{0,\dots, L\}$ and $k\in \{1, \dots, K_{\ell}\}$, where each $I_{k,\ell}$ is a subset of indices of the tensor, is said to be an index tree with root $I_{0,1}=D$ if each node $I_{\ell, k}$ satisfies one of the following properties:
    \begin{itemize}
    \item is a leaf (it has no successors),  
    \item it contains the union of $m_{\ell,k}$ disjoint successors (also referred to as children)
    \begin{equation*}
        I_{\ell,k}\supseteq  \dot{\bigcup}_{t\in c(I_{\ell,k})} t,
    \end{equation*}
    where $c(I_{\ell,k})$ is the set of the children of $I_{\ell,k}$.
    \end{itemize}
%The set of leaves and interior nodes are denoted by $\mathcal{L}(\mathcal{I})$ and $\mathcal{I}(\mathcal{I})$ respectively. 
\end{defi}

The primary advantage of this notation is that each node of the tree can be accessed using only two indices. To simplify further, in expressions involving \( I_{\ell,k} \), we will use only \( \ell \) and \( k \). For example, we denote \( \mathcal{T}^{I_{\ell,k}} \) simply as \( \mathcal{T}^{\ell,k} \). 
Note that Definition \ref{def:index_tree} differs from the definition of dimension tree for hierarchical Tucker \cite[Definition~3.1]{grasedyck2010hierarchical} in the number of children per node and the depth of the leaves. Our definition reduces to  \cite[Definition~3.1]{grasedyck2010hierarchical} if the index tree is almost a complete binary tree, except that on the last but one
level there may appear leaves. An example of index tree is shown in Figure \ref{fig: example of index tree}. This tree serves as a toy example, which we will use frequently throughout our discussion to elucidate various aspects of the structure and relationships involved.

\begin{figure}
    \centering
\begin{tikzpicture}

    % root node
     \node (D1) at (6.5,2) {$I_{0,1}$};
    % Nodes in the second row
    \node (E1) at (5.3,1) {$I_{1,1}$};
    \node (E2) at (6.5,1) {$I_{1,2}$};
    \node (E3) at (7.7,1) {$I_{1,3}$};
      % Nodes in the third row  
    \node (F1) at (4.6,0) {$I_{2,1}$};
    \node (F2) at (6.0,0) {$I_{2,2}$};
    \node (F3) at (7,0) {$I_{2,3}$};
    \node (F4) at (8.4,0) {$I_{2,4}$};
    % Nodes in the fourth row
    \node (G1) at (3.9,-1) {$I_{3,1}$};
    \node (G2) at (5.3,-1) {$I_{3,2}$};

    \draw (D1) -- (E1);
    \draw (D1) -- (E2);
    \draw (D1) -- (E3);
    
    \draw (E1) -- (F1);
    \draw (E1) -- (F2);
    \draw (E3) -- (F3);
    \draw (E3) -- (F4);

    \draw (F1) -- (G1);
    \draw (F1) -- (G2);

%-------
\node (M) at (9.4,0.7) {$=$};
%-------

    % root node
\node (AD1) at (12.5,2) {$\{1,2,3,4,5,6\}$};
% Nodes in the second row
\node (AE1) at (11.3,1) {$\{1,2,3\}$};
\node (AE2) at (12.5,1) {$\{4\}$};
\node (AE3) at (13.7,1) {$\{5, 6\}$};
% Nodes in the third row  
\node (AF1) at (10.6,0) {$\{1,2\}$};
\node (AF2) at (12,0) {$\{3\}$};
\node (AF3) at (13,0) {$\{5\}$};
\node (AF4) at (14.4,0) {$\{6\}$};
% Nodes in the fourth row
\node (AG1) at (9.9,-1) {$\{1\}$};
\node (AG2) at (11.3,-1) {$\{2\}$};

\draw (AD1) -- (AE1);
\draw (AD1) -- (AE2);
\draw (AD1) -- (AE3);

\draw (AE1) -- (AF1);
\draw (AE1) -- (AF2);
\draw (AE3) -- (AF3);
\draw (AE3) -- (AF4);

\draw (AF1) -- (AG1);
\draw (AF1) -- (AG2);

\end{tikzpicture}
    \caption{An example of an index tree.}
    \label{fig: example of index tree}
    \end{figure}
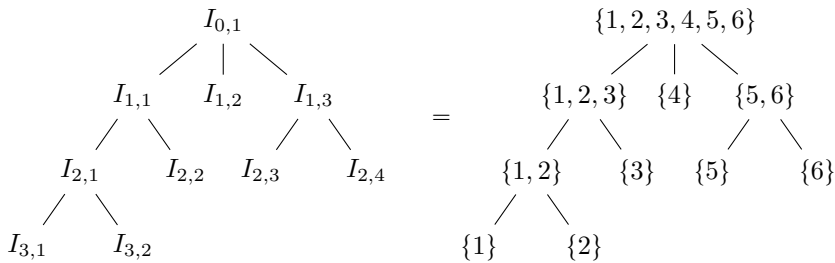

Since defining the TTN format starting from the definition of an index tree is rather complicated, we prefer to first provide the definition of an extended index tree. As we will see, it will be easier to define the format once this definition is given.
\begin{defi}{(extended index tree).}
    Given an index tree $\mathcal{I}$, the \emph{extended index tree} $\overline{\mathcal{I}}$ of $\mathcal{I}$ is the tree obtained from $\mathcal{I}$ by adding an empty node, called \textit{dummy node}, to each leaf that is at level $1\leq\ell<L$ until all leaves are at level $L$. Some quantities, such as the number of nodes and children, may change when using the extended index tree. 
\end{defi}
Figure \ref{fig: example of extended index tree} shows the extended index tree of the index tree in Figure \ref{fig: example of index tree}. 

\begin{figure}
    \centering
\begin{tikzpicture}

    % root node
     \node (D1) at (6.5,2) {$\{1,2,3,4,5,6\}$};
    % Nodes in the second row
    \node (E1) at (5.0,1) {$\{1,2,3\}$};
    \node (E2) at (6.5,1) {$\{4\}$};
    \node (E3) at (8,1) {$\{5, 6\}$};
      % Nodes in the third row  
    \node (F1) at (4.3,0) {$\{1,2\}$};
    \node (F2) at (5.6,0) {$\{3\}$};
    \node (F3) at (7.4,0) {$\{5\}$};
    \node (F4) at (8.4,0) {$\{6\}$};
    \node (F5) at (6.5,0) {$\{\phantom{4}\}$};
    % Nodes in the fourth row
    \node (G1) at (3.8,-1) {$\{1\}$};
    \node (G2) at (4.8,-1) {$\{2\}$};
    \node (G3) at (5.6,-1) {$\{\phantom{3}\}$};
    \node (G4) at (6.5,-1) {$\{\phantom{4}\}$};
    \node (G5) at (7.4,-1) {$\{\phantom{4}\}$};
    \node (G6) at (8.4,-1) {$\{\phantom{4}\}$};

    \draw (D1) -- (E1);
    \draw (D1) -- (E2);
    \draw (D1) -- (E3);
    
    \draw (E1) -- (F1);
    \draw (E1) -- (F2);
    \draw (E3) -- (F3);
    \draw (E3) -- (F4);

    \draw (F1) -- (G1);
    \draw (F1) -- (G2);

    \draw (E2)--(F5)--(G4);
    \draw (F2)--(G3);
    \draw (F3)--(G5);
    \draw (F4)--(G6);
%--------------
%--------------
    % root node
     \node (AD1) at (0.5,2) {$\overline{I_{0,1}}$};
    % Nodes in the second row
    \node (AE1) at (-1.0,1) {$\overline{I_{1,1}}$};
    \node (AE2) at (0.5,1) {$\overline{I_{1,2}}$};
    \node (AE3) at (2.0,1) {$\overline{I_{1,3}}$};
      % Nodes in the third row  
    \node (AF1) at (-1.7,0) {$\overline{I_{2,1}}$};
    \node (AF2) at (-0.4,0) {$\overline{I_{2,2}}$};
    \node (AF3) at (1.4,0) {$\overline{I_{2,4}}$};
    \node (AF4) at (2.4,0) {$\overline{I_{2,5}}$};
    \node (AF5) at (0.5,0) {$\overline{I_{2,3}}$};
    % Nodes in the fourth row
    \node (AG1) at (-2.2,-1) {$\overline{I_{3,1}}$};
    \node (AG2) at (-1.2,-1) {$\overline{I_{3,2}}$};
    \node (AG3) at (-0.4,-1) {$\overline{I_{3,3}}$};
    \node (AG4) at (0.5,-1) {$\overline{I_{3,4}}$};
    \node (AG5) at (1.4,-1) {$\overline{I_{3,5}}$};
    \node (AG6) at (2.4,-1) {$\overline{I_{3,6}}$};
%----
     \node (EQ) at (3.4,0.7) {$=$};
   %------------ 
    
    \draw (AD1) -- (AE1);
    \draw (AD1) -- (AE2);
    \draw (AD1) -- (AE3);
    
    \draw (AE1) -- (AF1);
    \draw (AE1) -- (AF2);
    \draw (AE3) -- (AF3);
    \draw (AE3) -- (AF4);

    \draw (AF1) -- (AG1);
    \draw (AF1) -- (AG2);

    \draw (AE2)--(AF5)--(AG4);
    \draw (AF2)--(AG3);
    \draw (AF3)--(AG5);
    \draw (AF4)--(AG6);

\end{tikzpicture}
    \caption{Extended index tree of the index tree in Figure \ref{fig: example of index tree}.}
    \label{fig: example of extended index tree}
    \end{figure}
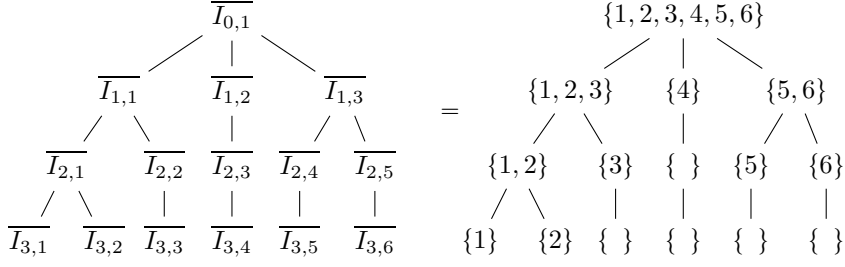

\begin{defi}[Tree tensor network format]\label{def: tree tensor network format}
    A tree tensor network representation of  $\mathcal{T}\in\mathbb{R}^{n_1\times \dots \times n_d}$ of index tree $\mathcal{I}$ consists of a family of tensors $\{\mathcal{B}^{(\ell, k)}\}_{\overline{I_{\ell,k}}\in\Bar{\mathcal{I}}}$ assembled in the following way

    \begin{equation}\label{eq: tree_tensor_def}
         \mathcal{T} = \mathcal{B}^{(0,1)}\times_{k_0=1}^{m_{0,1}}\left(\mathcal{B}^{(1,k_0)}\times_{k_1=1}^{m_{1,k_0}}\left(\mathcal{B}^{(2,k_1)}\dots \left( \mathcal{B}^{(L-1,k_{L-1})}\times_{k_{L-1}=1}^{m_{L-1,k_{L-1}}}\mathcal{B}^{(L,k_{L})}\right)\right)\dots\right),
    \end{equation}
     where 
    \begin{itemize}
        \item $\mathcal{B}^{(0,1)}$ has order $m_{0,1}$ and size $r_{1,1}\times\dots\times r_{1,m_{0,1}}$ and is called root tensor;
        \item if $|\overline{I_{\ell,k}}|\geq 2$ then $\mathcal{B}^{(\ell,k)}$ is a tensor of order $(1+m_{\ell,k})$ with size $r_{\ell,k}\times r_{\ell+1,c_{\ell, k}}\times\dots\times r_{\ell+1,c_{\ell,k}+m_{\ell,k}-1}$, where $c_{\ell, k}$ is the position of the first child of $I_{\ell,k}$ and is called transfer tensor;
        \item if $\overline{I_{\ell,k}}=\{\mu_1, \dots, \mu_s\}$ is a leaf, $\mathcal{B}^{(\ell,k)}$ is a matrix of size $r_{\ell,k}\times (n_{\mu_1}n_{\mu_2}\cdots n_{\mu_s})$ and is called leaf tensor;
        \item if $\overline{I_{\ell,k}}=\{\phantom{4}\}$, then $\mathcal{B}^{(\ell,k)}$ is the identity of proper size and is called dummy tensor.        
    \end{itemize}
    and $\overline{\mathcal{R}}=(r_{\ell,k})_{\overline{I_{\ell,k}}\in\Bar{\mathcal{I}}}$ is a tuple of positive integers.
\end{defi}
The tuple $\mathcal{R}$ obtained from $\overline{\mathcal{R}}$ by removing the integers corresponding to the dummy nodes is called the tree tensor network representation rank.  Figure \ref{fig:toy_example_cubes} illustrates the structure of a TTN factorization with index tree from our toy example. 

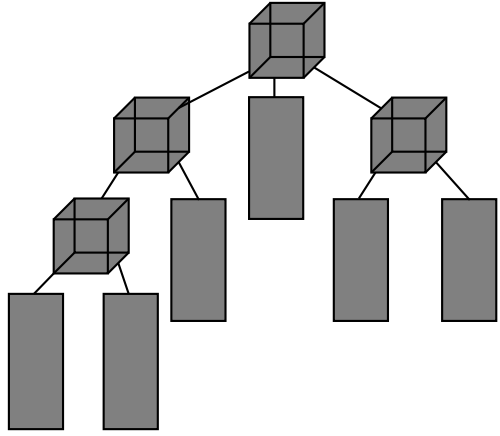
\begin{figure}
    \centering
\begin{tikzpicture}[scale=0.18]
% Cubo iniziale

\coordinate (A) at (0,0,0);
\coordinate (B) at (4,0,0);
\coordinate (C) at (4,4,0);
\coordinate (D) at (0,4,0);
\coordinate (E) at (0,0,4);
\coordinate (F) at (4,0,4);
\coordinate (G) at (4,4,4);
\coordinate (H) at (0,4,4);

% Disegna le facce del cubo
\fill[gray] (A)--(B)--(C)--(D)--cycle;
\fill[gray] (A)--(B)--(F)--(E)--cycle;
\fill[gray] (A)--(E)--(H)--(D)--cycle;
%\fill[gray] (A)--(E)--(F)--(B)--cycle;
\draw[thick] (A)--(B)--(C)--(D)--cycle; % Base
\draw[thick] (E)--(F)--(G)--(H)--cycle; % Superiore
\draw[thick] (A)--(E) (B)--(F) (C)--(G) (D)--(H); % Laterali

% Cubo con l'1
\coordinate (A1) at (-10,-7,0);
\coordinate (B1) at (-6,-7,0);
\coordinate (C1) at (-6,-3,0);
\coordinate (D1) at (-10,-3,0);
\coordinate (E1) at (-10,-7,4);
\coordinate (F1) at (-6,-7,4);
\coordinate (G1) at (-6,-3,4);
\coordinate (H1) at (-10,-3,4);

\fill[gray] (A1)--(B1)--(C1)--(D1)--cycle;
\fill[gray] (A1)--(B1)--(F1)--(E1)--cycle;
\fill[gray] (A1)--(E1)--(H1)--(D1)--cycle;

% Disegna le facce del cubo
\draw[thick] (A1)--(B1)--(C1)--(D1)--cycle; % Base
\draw[thick] (E1)--(F1)--(G1)--(H1)--cycle; % Superiore
\draw[thick] (A1)--(E1) (B1)--(F1) (C1)--(G1) (D1)--(H1); % Laterali

\coordinate (A2) at (-0.8,-2.2,2);
\coordinate (B2) at (3.2,-2.2,2);
\coordinate (C2) at (3.2,-11.2,2);
\coordinate (D2) at (-0.8,-11.2,2);
\fill[gray] (A2)--(B2)--(C2)--(D2)--cycle;
\draw[thick] (A2)--(B2)--(C2)--(D2)--cycle;

\coordinate (A3) at (9,-7,0);
\coordinate (B3) at (13,-7,0);
\coordinate (C3) at (13,-3,0);
\coordinate (D3) at (9,-3,0);
\coordinate (E3) at (9,-7,4);
\coordinate (F3) at (13,-7,4);
\coordinate (G3) at (13,-3,4);
\coordinate (H3) at (9,-3,4);

\fill[gray] (A3)--(B3)--(C3)--(D3)--cycle;
\fill[gray] (A3)--(B3)--(F3)--(E3)--cycle;
\fill[gray] (A3)--(E3)--(H3)--(D3)--cycle;

% Disegna le facce del cubo
\draw[thick] (A3)--(B3)--(C3)--(D3)--cycle; % Base
\draw[thick] (E3)--(F3)--(G3)--(H3)--cycle; % Superiore
\draw[thick] (A3)--(E3) (B3)--(F3) (C3)--(G3) (D3)--(H3); % Laterali

% Cubo con il 4
\coordinate (A4) at (-16,-16,-4);
\coordinate (B4) at (-12,-16,-4);
\coordinate (C4) at (-12,-12,-4);
\coordinate (D4) at (-16,-12,-4);
\coordinate (E4) at (-16,-16,0);
\coordinate (F4) at (-12,-16,0);
\coordinate (G4) at (-12,-12,0);
\coordinate (H4) at (-16,-12,0);

\fill[gray] (A4)--(B4)--(C4)--(D4)--cycle;
\fill[gray] (A4)--(B4)--(F4)--(E4)--cycle;
\fill[gray] (A4)--(E4)--(H4)--(D4)--cycle;

% Disegna le facce del cubo
\draw[thick] (A4)--(B4)--(C4)--(D4)--cycle; % Base
\draw[thick] (E4)--(F4)--(G4)--(H4)--cycle; % Superiore
\draw[thick] (A4)--(E4) (B4)--(F4) (C4)--(G4) (D4)--(H4); % Laterali

\coordinate (A5) at (-5,-8.2,6);
\coordinate (B5) at (-1,-8.2,6);
\coordinate (C5) at (-1,-17.2,6);
\coordinate (D5) at (-5,-17.2,6);
\fill[gray] (A5)--(B5)--(C5)--(D5)--cycle;
\draw[thick] (A5)--(B5)--(C5)--(D5)--cycle;

\coordinate (A6) at (-17,-15.2,6);
\coordinate (B6) at (-13,-15.2,6);
\coordinate (C6) at (-13,-25.2,6);
\coordinate (D6) at (-17,-25.2,6);
\fill[gray] (A6)--(B6)--(C6)--(D6)--cycle;
\draw[thick] (A6)--(B6)--(C6)--(D6)--cycle;

\coordinate (A7) at (-10,-15.2,6);
\coordinate (B7) at (-6,-15.2,6);
\coordinate (C7) at (-6,-25.2,6);
\coordinate (D7) at (-10,-25.2,6);
\fill[gray] (A7)--(B7)--(C7)--(D7)--cycle;

\draw[thick] (A7)--(B7)--(C7)--(D7)--cycle;

\coordinate (A8) at (7,-8.2,6);
\coordinate (B8) at (11,-8.2,6);
\coordinate (C8) at (11,-17.2,6);
\coordinate (D8) at (7,-17.2,6);
\fill[gray] (A8)--(B8)--(C8)--(D8)--cycle;
\draw[thick] (A8)--(B8)--(C8)--(D8)--cycle;

\coordinate (A9) at (15,-8.2,6);
\coordinate (B9) at (19,-8.2,6);
\coordinate (C9) at (19,-17.2,6);
\coordinate (D9) at (15,-17.2,6);
\fill[gray] (A9)--(B9)--(C9)--(D9)--cycle;
\draw[thick] (A9)--(B9)--(C9)--(D9)--cycle;

%-----
\coordinate (I01) at (-0.8,-0.3,2);
\coordinate (J01) at (-6,-3,2);
\draw[thick] (I01)--(J01);

\coordinate (I02) at (0.3,-1.5,0);
\coordinate (J02) at (0.3,-3,0);
\draw[thick] (I02)--(J02);

\coordinate (I03) at (4,0,2);
\coordinate (J03) at (8.2,-3.8,0);
\draw[thick] (I03)--(J03);

\coordinate (I04) at (-12,-9.3,-2);
\coordinate (J04) at (-14,-12,-4);
\draw[thick] (I04)--(J04);

\coordinate (I05) at (-6,-7,2);
\coordinate (J05) at (-4.5,-9.8,2);
\draw[thick] (I05)--(J05);

\coordinate (I06) at (7,-9.3,-2);
\coordinate (J06) at (5,-12,-4);
\draw[thick] (I06)--(J06);

\coordinate (I07) at (13,-7,2);
\coordinate (J07) at (15.5,-9.8,2);
\draw[thick] (I07)--(J07);

\coordinate (I08) at (-16.7,-16.7,-2);
\coordinate (J08) at (-19,-19.05,-4);
\draw[thick] (I08)--(J08);

\coordinate (I09) at (-12,-16,-2);
\coordinate (J09) at (-12,-19.05,-4);
\draw[thick] (I09)--(J09);

\end{tikzpicture}
    \caption{TTN decomposition of a tensor using the index tree from our toy example.}
    \label{fig:toy_example_cubes}
\end{figure}

Note that the TTN representation in Definition \ref{def: tree tensor network format} reduces to the Tucker format when the index tree is like the one in Figure \ref{fig: MLN_TT_diagrams} (left), while reduces to the TT format when the index tree is like the one in Figure \ref{fig: MLN_TT_diagrams} (right).

In the following, we often opt for the matrix representation of tensors, for which Equation \eqref{eq: tree_tensor_def} becomes
    \begin{equation}\label{eq: tree_tensor_def_matricized}
         {\mathcal{T}}^D =\left(\mathop{\otimes}\limits_{k=1}^{{K}_L}
          B_{L, k}\right)
          \left(\mathop{\otimes}\limits_{k=1}^{{K}_{L-1}}
          B_{L-1, k}\right)
          \dots
          \left(\mathop{\otimes}\limits_{k=1}^{{K}_1}
          B_{1, k}\right)B_{0,1} =\left(\prod_{\ell=L}^1 
         \left(\mathop{\otimes}\limits_{k=1}^{{K}_\ell}
          B_{\ell, k}\right)\right)B_{0,1}.
    \end{equation}
    Throughout this work, the symbol \(\otimes\) represents the Kronecker product. The notation \(B_{\ell,k}\) refers to the matricization of \(\mathcal{B}_{\ell,k}\) over all indices except the first. An exception is made for the root, where \(B_{0,1}\) denotes the vectorization of \(\mathcal{B}_{0,1}\).

\begin{figure}
    \centering   
\begin{tikzpicture}

    % Nodes in the first row
        \node (C1) at (1.5,1) {$\{1,2,\dots, d\}$};
    % Nodes in the second row
    \node (B1) at (-0.5,-1) {$\{1\}$};
    \node (B2) at (0.5,-1) {$\{2\}$};
    \node (B3) at (1.5,-1) {$\dots$};
    \node (B4) at (2.5,-1) {$\dots$};
    \node (B5) at (3.5,-1) {$\{d\}$};
    % Nodes in the third row

    \draw (B1) -- (C1);
    \draw (B2) -- (C1);
    \draw (B3) -- (C1);
    \draw (B4) -- (C1);
    \draw (B5) -- (C1);

     \node (D1) at (6.5,2) {$\{1,2,\dots, d\}$};
    % Nodes in the second row
   % \node (E1) at (5.8,1) {$\{1\}$};
    \node (E2) at (6.5,1) {$\{1,2,\dots, d-1\}$};
   % \node (F1) at (6.5,0) {$\{2\}$};
    \node (F2) at (6.5,0) {$\dots$};
 %   \node (G1) at (7.2,-1) {$\dots$};
    \node (G2) at (6.5,-1) {$\{1,2\}$};
    %\node (H1) at (7.9,-2) {$\{d$-$1\}$};
    \node (H2) at (6.5,-2) {$\{1\}$};
    % Nodes in the third row

   % \draw (D1) -- (E1);
    \draw (D1) -- (E2);
   % \draw (E2) -- (F1);
    \draw (E2) -- (F2);
   % \draw (F2) -- (G1);
    \draw (F2) -- (G2);
 %   \draw (G2) -- (H1);
    \draw (G2) -- (H2);
    
\end{tikzpicture}
    \caption{Index tree of Tucker decomposition (left) and Tensor-Train decomposition (right).}
\label{fig: MLN_TT_diagrams}
\end{figure}
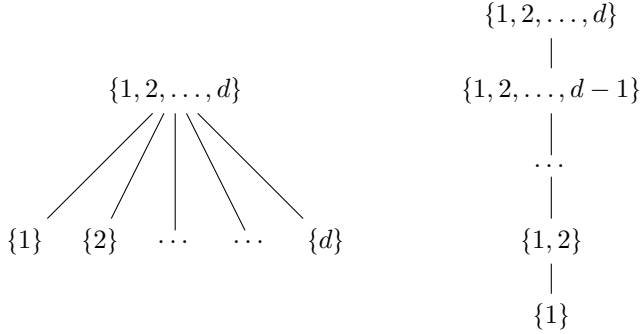
A classic way to visualize a tensor decomposition is through the use of tensor diagrams. In Figure \ref{fig: example of TTN representation } we show the tensor diagram of a 6-dimensional tensor with the index tree in Figure \ref{fig: example of index tree}. In the diagram, we have depicted dummy tensors in yellow. These tensors have no theoretical role in the tensor representation but are useful in practice to lighten the notation. 
\begin{figure}
    \centering
    \begin{tikzpicture}[
        bluenode/.style={rectangle,draw,rounded corners,minimum width=1.5em,minimum height=1.5em, fill=blue!20},
        yellownode/.style={rectangle,draw,rounded corners,minimum width=1.5em,minimum height=1.5em, fill=yellow!20},
        level distance=1.5cm,
        level 1/.style={sibling distance=4cm},
        level 2/.style={sibling distance=2cm},
        level 3/.style={sibling distance=1.5cm}
    ]
    
    % root node
    \node[bluenode] (D1) at (6.5,2) {$\mathcal{B}_{0,1}$};
    
    % Nodes in the second row
    \node[bluenode] (E1) at (4.8,0.9) {$\mathcal{B}_{1,1}$};
    \node[bluenode] (E2) at (6.5,0.9) {$\mathcal{B}_{1,2}$};
    \node[bluenode] (E3) at (8.2,0.9) {$\mathcal{B}_{1,3}$};
    
    % Nodes in the third row  
    \node[bluenode] (F1) at (3.7,-0.2) {$\mathcal{B}_{2,1}$};
    \node[bluenode] (F2) at (5.4,-0.2) {$\mathcal{B}_{2,2}$};
    \node[yellownode] (F22) at (6.5, -0.2) {$\mathcal{B}_{2,3}$};
    \node[bluenode] (F3) at (7.6,-0.2) {$\mathcal{B}_{2,4}$};
    \node[bluenode] (F4) at (8.8,-0.2) {$\mathcal{B}_{2,5}$};
    % Nodes in the fourth row
    \node[bluenode] (G1) at (3.0,-1.3) {$\mathcal{B}_{3,1}$};
    \node[bluenode] (G2) at (4.2,-1.3) {$\mathcal{B}_{3,2}$};
    \node[yellownode] (G3) at (5.4,-1.3) {$\mathcal{B}_{3,3}$};
    \node[yellownode] (G4) at (6.5, -1.3) {$\mathcal{B}_{3,4}$};
    \node[yellownode] (G5) at (7.6,-1.3) {$\mathcal{B}_{3,5}$};
    \node[yellownode] (G6) at (8.8,-1.3) {$\mathcal{B}_{3,6}$};
    % Nodes in the fourth row
    \node (L1) at (3.0,-2.3) {};
    \node (L2) at (4.2,-2.3) {};
    \node (L3) at (5.4,-2.3) {};
    \node (L4) at (6.5,-2.3) {};
    \node (L5) at (7.6,-2.3) {};
    \node (L6) at (8.8,-2.3) {};
    
    \draw (D1) -- (E1) node[midway,above] {$r_{1,1}$};
    \draw (D1) -- (E2) node[midway, right] {$r_{1,2}$};
    \draw (D1) -- (E3) node[midway, above] {$r_{1,3}$};
    
    \draw (E1) -- (F1) node[midway,left] {$r_{2,1}$};
    \draw (E1) -- (F2) node[midway,right] {$r_{2,2}$};
    \draw (E2) -- (F22) node[midway,right] {$r_{2,3}$};
    \draw (E3) -- (F3) node[midway,left] {$r_{2,4}$};
    \draw (E3) -- (F4) node[midway,right] {$r_{2,5}$};
    
    \draw (F1) -- (G1) node[midway,left] {$r_{3,1}$};
    \draw (F1) -- (G2) node[midway,right] {$r_{3,2}$};
    \draw (F2) -- (G3) node[midway,right] {$r_{3,3}$};
   \draw (F22) -- (G4) node[midway,right] {$r_{3,4}$};
    \draw (F3) -- (G5) node[midway,right] {$r_{3,5}$};
    \draw (F4) -- (G6) node[midway,right] {$r_{3,6}$};

    \draw (G1) -- (L1) node[midway,left] {$n_1$};
    \draw (G2) -- (L2) node[midway,right] {$n_2$};
    \draw (G3) -- (L3) node[midway,right] {$n_3$};
    \draw (G4) -- (L4) node[midway,right] {$n_4$};
    \draw (G5) -- (L5) node[midway,right] {$n_5$};
    \draw (G6) -- (L6) node[midway,right] {$n_6$};
    
    \end{tikzpicture}
    \caption{Tensor diagram of a TTN representation with the index tree in Figure \ref{fig: example of index tree}.}
    \label{fig: example of TTN representation }
\end{figure}
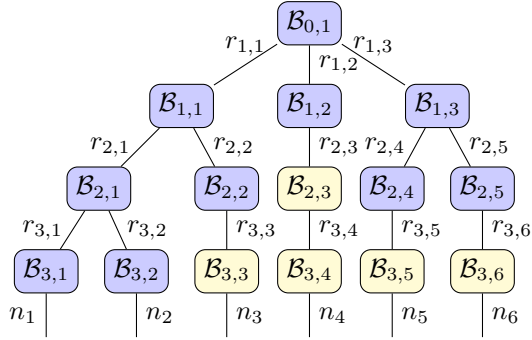

In the following, when we say that a tensor has TTN-rank $\mathcal{R}$, we mean that there exists a representation of the tensor with $\mathcal{R}$ as TTN-rank (the associated index tree will be clear from the context). Given two tensors $\mathcal{A}$ of TTN-rank $\mathcal{R}_\mathcal{A}$ and $\mathcal{B}$ of TTN-rank $\mathcal{R}_\mathcal{B}$, we say that $\mathcal{R}_\mathcal{A} < \mathcal{R}_\mathcal{B}$ if the TTN-rank of $\mathcal A$ is component-wise smaller than the one of $\mathcal B$. 

%----------------------
%----------------------

\section{Tree tensor network approximation}\label{sec:TTNN}
In this section, we define our streamable and single pass algorithm that, given an index tree $\mathcal{I}$ and a tuple $\mathcal{R}=(r_{\ell,k})_{I_{\ell,k}\in\mathcal{I}}$, provides a low-rank tree tensor network approximation of a tensor $\mathcal{T}$ of size $n_1 \times\dots\times n_d$ with TTN representation rank $\mathcal{R}$. 

Recently, various GN-based methods have been formulated to calculate low-rank approximations of tensors in different formats: Tucker \cite{Another_Tucker,bucci2024multilinear, bucci2024sequential}, tensor train \cite{rounding_TT, kressner2023streaming}, and tensor ring \cite{tensorringGN}.
The structure of these algorithms is largely similar. In essence, they can all be reduced to a sequence of GN projections across different tensor modes. With GN projection we mean the following: consider a tensor $\mathcal{T}\in \mathbb{R}^{n_1\times \dots \times n_d}$, a subset of indices $I\subset D$ and two DRMs $X_I\in \mathbb{R}^{n_I\times r_I}$ and $Y_I\in \mathbb{R}^{n_I\times (r_I+ p_I)}$, the projection is defined by
\begin{equation}
    P_{I} = \mathcal{T}^I X_I (Y_I^T \mathcal{T}^I X_I)^\dagger Y_I^T
\end{equation}
The multilinear Nystr\"om (MLN) \cite{bucci2024multilinear} for instance approximate the tensor $\mathcal{T}$ in Tucker format in the following way
\begin{equation}\label{eq: MLN}
   \hat{\mathcal{T}}_{MLN} = \mathcal{T} \times_{\mu=1}^d P_{\{\mu\}}. 
\end{equation}
Similarly, setting $P_{1:k}:=P_{\{1, \dots, k\}}$ the streaming tensor train approximation method (STTA) \cite{kressner2023streaming} approximates the tensor in TT format as
\begin{equation} \label{eq: STTA}
    \hat{\mathcal{T}}^{\{1,\dots, d-1\}}_{STTA}=(P_1 \otimes I)\dots (P_{1:d-2}\otimes I)P_{1:d-1}\mathcal{T}^{\{1,\dots, d-1\}}.
\end{equation}

\noindent  
Our method extends these approaches to a general TTN representation. First, we need to adapt the notation of DRMs and projectors. 
Given an index tree $\mathcal{I}$, a tuple of target ranks $\mathcal{R} = \{r_{\ell, k}\}$, and a tuple of oversamples $\mathcal{P} = \{p_{\ell, k}\}$, for each $I_{\ell, k}\in \Bar{\mathcal{I}}$, except for the root, we define the following DRMs:
\begin{equation}\label{sketch}
\begin{split}
    & X_{\ell,k} \text{ is a DRM of size: } n_{D\backslash I_{\ell,k}}
    \times r_{\ell,k},\\
    & Y_{\ell,k}\text{ is a DRM of size: }n_{I_{\ell, k}}
    \times(r_{\ell,k} + p_{\ell,k}),\\
\end{split}   
\end{equation}
and the following oblique projectors
\begin{equation}\label{projectors}
    P_{\ell,k}:=\mathcal{T}^{\ell,k}X_{\ell,k}(Y_{\ell,k}^T\mathcal{T}^{\ell,k}X_{\ell,k})^\dagger Y_{\ell,k}^T.
\end{equation}
We use the convention that $X_{\ell,k}$, $Y_{\ell,k}$ and $P_{\ell,k}$ are all identities if $I_{\ell,k}$ is a dummy node. 
With the latter clarification, the treatment of dummy nodes will be clear. Therefore, in what follows, we will assume that the index tree is equal to the extended index tree (there are no dummy nodes), allowing us to simplify the notation and avoid the constant use of the overline symbol.

The \textbf{tree tensor network Nyström} (TTNN) approximant $\widehat{\mathcal{T}}_{TTNN}$ of $\mathcal{T}$ with index tree $\mathcal{I}$ can be computed by performing the following sequence of projections
\begin{align*} \label{eq:TTNN_projections}
    &\mathcal{T}^{(1)} = \mathcal{T}\times_{1,1}P_{1,1}\times_{1,2} P_{1,2}\dots \times P_{1,K_1}, \\
    &\mathcal{T}^{(2)} = \mathcal{T}^{(1)}\times_{2,1}P_{2,1}\times_{2,2} P_{2,2}\dots \times P_{2,K_2},\\
    & \,\,\,\qquad\vdots\\
    & \mathcal{T}^{(L)}=\mathcal{T}^{(L-1)}\times_{L,1}P_{L,1}\times_{L,2} P_{L,2}\dots \times P_{L,K_L},
\end{align*}
and by setting $\widehat{\mathcal{T}}_{TTNN} := \mathcal{T}^{(L)}$.
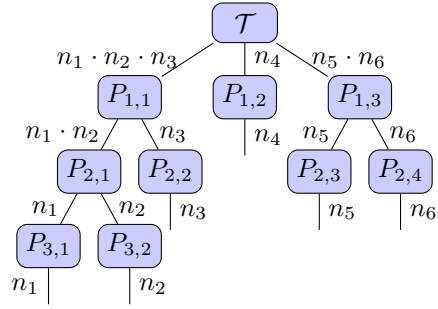
\begin{figure}
    \centering
    \begin{tikzpicture}[scale=0.9,
        bluenode/.style={rectangle,draw,rounded corners,minimum width=1.5em,minimum height=1.5em, fill=blue!20},
        level distance=1.5cm,
        level 1/.style={sibling distance=4cm},
        level 2/.style={sibling distance=2cm},
        level 3/.style={sibling distance=1.5cm}
    ]
    
    % root node
    \node[bluenode] (D1) at (6.5,2) {$\phantom{0}\mathcal{T}{\phantom{0}}$};
    
    % Nodes in the second row
    \node[bluenode] (E1) at (4.8,0.9) {$P_{1,1}$};
    \node[bluenode] (E2) at (6.5,0.9) {$P_{1,2}$};
    \node[bluenode] (E3) at (8.2,0.9) {$P_{1,3}$};
    
    % Nodes in the third row  
    \node[bluenode] (F1) at (4.2,-0.2) {$P_{2,1}$};
    \node[bluenode] (F2) at (5.4,-0.2) {$P_{2,2}$};
    \node[bluenode] (F3) at (7.6,-0.2) {$P_{2,3}$};
    \node[bluenode] (F4) at (8.8,-0.2) {$P_{2,4}$};
    % Nodes in the fourth row
    \node[bluenode] (G1) at (3.6,-1.3) {$P_{3,1}$};
    \node[bluenode] (G2) at (4.8,-1.3) {$P_{3,2}$};

    % Nodes in the fourth row
    \node (L1) at (3.6,-2.3) {};
    \node (L2) at (4.8,-2.3) {};
    \node (L3) at (5.4,-1.2) {};
    \node (L4) at (6.5,-0.1) {};
    \node (L5) at (7.6,-1.2) {};
    \node (L6) at (8.8,-1.2) {};
    
    \draw (D1) -- (E1) node[midway,left] {$n_1\cdot n_2 \cdot n_3$};

    \draw (D1) -- (E2) node[midway, right] {$n_4$};
    \draw (D1) -- (E3) node[midway, right] {$n_5 \cdot n_6$};
    
    \draw (E1) -- (F1) node[midway,left] {$n_1 \cdot n_2$};
    \draw (E1) -- (F2) node[midway,right] {$n_3$};
    \draw (E3) -- (F3) node[midway,left] {$n_5$};
    \draw (E3) -- (F4) node[midway,right] {$n_6$};
    
    \draw (F1) -- (G1) node[midway,left] {$n_1$};
    \draw (F1) -- (G2) node[midway,right] {$n_2$};

    \draw (G1) -- (L1) node[midway,left] {$n_1$};
    \draw (G2) -- (L2) node[midway,right] {$n_2$};
    \draw (F2) -- (L3) node[midway,right] {$n_3$};
    \draw (E2) -- (L4) node[midway,right] {$n_4$};
    \draw (F3) -- (L5) node[midway,right] {$n_5$};
    \draw (F4) -- (L6) node[midway,right] {$n_6$};    
    
    \end{tikzpicture}
    \caption{TTNN approximation of a six-mode tensor with the index tree in figure \ref{fig: example of index tree}.}
    \label{fig: 6-dim with projectors}
\end{figure}
Otherwise, using matricizations, we can express the formula in a more compact way as
\begin{equation}\label{TTNN approximant projector}
    \widehat{\mathcal{T}}^{D}_{TTNN} = \left(\prod_{\ell=L}^1 \left(\mathop{\otimes}\limits_{k=1}^{{K_\ell}} P_{\ell, k}\right)\right)\mathcal{T}^D.
\end{equation}
Note that this approximation retrieves the MLN approximation if the index tree is like the one in Figure \ref{fig: MLN_TT_diagrams} (left) and retrieves the STTA approximation if the index tree is like the one in Figure \ref{fig: MLN_TT_diagrams} (right).
\par
However, computing the approximant as described above is computationally demanding and does not yield a compressed representation in TTN format. Nevertheless, we chose to introduce the approximant in this way as it provides a more intuitive understanding, a clear geometric interpretation, and will serve for our analysis. We now describe the process for obtaining the TTNN approximant in TTN format with ranks $\mathcal{R}$. For reference, the relevant parameters are described in Definition \ref{def: tree tensor network format}.
The approach mirrors that of Algorithm \ref{alg:GN}: first, we compute all the sketches, then we perform the QR factorizations, and finally we construct the transfer tensors. \phantom{a}\\
\phantom{a}\\
\phantom{a}$\bullet$ \textbf{Sketch phase}\\ 
\phantom{a}\\
  \phantom{a,.,a}$\circ$ $\Omega_{\ell, k} = Y^T_{\ell,k}\mathcal{T}^{\ell,k} X_{\ell,k}$ if $I_{\ell, k}$ is not the root.
    \phantom{a}\\
    \phantom{a}\\
    \vspace{5mm}
  \phantom{abc}$\circ$ $\Psi_{\ell, k} = \begin{cases} 
        (Y_{1,1}^T\otimes\dots\otimes Y_{1,m_{0,1}}^T) \mathcal{T}^D &\text{if }I_{\ell,k}\text{ is the root,}\\
        (Y_{\ell+1,c_{\ell,k}}^T\otimes\dots \otimes Y_{\ell+1,c_{\ell,k}+m_{\ell,k}-1}^T )\mathcal{T}^{\ell,k} X_{\ell,k}&\text{if }I_{\ell,k}\text{ is not root or leaf,}\\
         \mathcal{T}^{\ell,k} X_{\ell,k}& \text{if }I_{\ell,k}\text{ is a leaf.}
    \end{cases}$\\
  
   \vspace{-3mm}\phantom{a}\hspace{-6mm}$\bullet$ \textbf{Recovery phase}\\   
   \phantom{a}\\
        \phantom{abc}$\circ$ $[Q_{\ell, k}, R_{\ell, k}] =\text{qr}(\Omega_{\ell,k})$,\\
        \phantom{a}\\
        \phantom{abc}$\circ$ $B_{\ell, k} =\begin{cases}
            Q_{\ell, k}^T \Psi_{\ell, k} & \text{if }I_{\ell,k}\text{ is the root,} \\
            Q_{\ell, k}^T\Psi_{\ell, k} R_{\ell, k}^\dagger & \text{if }I_{\ell,k}\text{ is not root or leaf,}\\
            \Psi_{\ell, k}R_{\ell, k}^{\dagger} & \text{if }I_{\ell,k}\text{ is a leaf.}
        \end{cases}$\\
        \\
\noindent See Algorithm \ref{alg:TTNN} for the pseudocode of the method.

Because the sketch phase is linear in $\mathcal{T}$ and the recovery phase only involves small objects, the algorithm is inherently streamable and one-pass. To illustrate this, consider the scenario where we aim to compute the TTNN approximation of a tensor expressed as a linear combination of other tensors, $\mathcal{T} = \lambda_1 \mathcal{H}_1 + \dots + \lambda_s \mathcal{H}_s$ and assume that each tensor $\mathcal{H}_s$ can only be accessed once and must be discarded before the next tensor is processed.
Using the same sketchings $X_{\ell, k}$ and $Y_{\ell, k}$ for each $\mathcal{H}_i$, compute the small matrices $\Omega^{(i)}_{\ell, k}$ and $\Psi^{(i)}_{\ell, k}$  and then form the linear combinations $\Omega_{\ell, k} = \sum_{i=1}^s \lambda_i \Omega_{\ell, k}^{(i)}$ and $\Psi_{\ell, k} = \sum_{i=1}^s \lambda_i \Psi_{\ell, k}^{(i)}$, which are exactly the sketchings of $\mathcal{T}$.
Once the $\Psi_{\ell, k}$ and $\Omega_{\ell, k}$ are computed proceed as described in the recovery phase to obtain the $B_{\ell, k}$.

A possible cause for concern is the stability of the algorithm as it involves the matrices $R_{\ell, k}^\dagger$. However, it is possible to never form these pseudoinverses explicitly and to equivalently solve the following least square problems
\begin{equation}
    \min_{X} \| \Psi_{\ell, k}- X R_{\ell, k}\|_F.
\end{equation}
Nonetheless, care needs to be taken when $R_{\ell, k}$ is ill-conditioned. An alternative is to solve the stabilized least square
\begin{equation}
    \| \Psi_{\ell, k}- X (R_{\ell, k})_\epsilon\|_F,
\end{equation}
where $\epsilon = 10 u\|R_{\ell, k}\|_2$ and $u$ denotes the machine precision, see \cite{nakatsukasa2020fast} for further details.
\begin{algorithm}
\caption{Tree Tensor Network Nystr\"om (TTNN)}\label{alg:TTNN}
\begin{algorithmic}[1]
    \Require  $\mathcal{T}\in \mathbb{R}^{n_1\times\dots \times n_d}$, index tree $\mathcal{I}$, TTN rank $\mathcal{R}$, TTN oversamples $\mathcal{P}$.
    \Ensure $\{\mathcal{B}_{\ell, k}\}_{(\ell, k)\in \mathcal{I}}$, TTN representation of $\mathcal{T}$ of rank $\mathcal{R}$.
    \For{$\ell=1, \dots, L$}
    \For{$k=1, \dots, K_{\ell}$}
    \State Draw random matrix $X_{\ell, k}\in     \mathbb{R}^{n_{D/I_{\ell,k}}
    \times r_{\ell,k}}$,
    \State Draw random matrix $Y_{\ell, k}\in \mathbb{R}^{n_{I_{\ell, k}}
    \times(r_{\ell,k} + p_{\ell,k})}$;
    \State \mbox{Compute an economy-size QR
factorization $Y_{\ell, k}^T\mathcal{T}^{\ell, k} X_{\ell, k} = Z_{\ell,k} R_{\ell,k}$;}
    \EndFor
    \EndFor 
    \State Compute \mbox{$B_{0,1}=(Z_{1,1}^T Y_{1,1}^T\otimes \dots \otimes Z_{1, m_{0,1}}^T Y_{1,m_{0,1}}^T)\mathcal{T}^{0, 1}$.}
    \For{$\ell=2, \dots, L-1$}
    \For{$k=1, \dots, K_{\ell}$}
    \State Set $W_{\ell, k} =Z_{c(\ell, k)}^T Y_{c(\ell, k)}^T\otimes \dots \otimes Z_{c(\ell, k)+m_{\ell, k}-1}^T Y_{c(\ell, k)+m_{\ell, k}-1}^T$
    \State Compute \mbox{$B_{\ell, k}=W_{\ell, k}\mathcal{T}^{\ell, k}X_{\ell, k}R_{\ell, k}^{\dagger}$}
    \EndFor
    \EndFor
    \For{$k=1, \dots, K_L$}
    \State $B_{L,k}= \mathcal{T}^{L,k} X_{L,k} R_{L, k}^\dagger$.
    \EndFor
    \State Return $\{\mathcal{B}_{\ell, k}\}_{(\ell, k)\in \mathcal{I}}$.
\end{algorithmic}
\end{algorithm}

\section{Sequential tree tensor network approximation}\label{sec:STTNN}

We have seen how to construct a TTN approximation of a tensor using the TTNN algorithm. 
In particular, the algorithm requires multiplying the entire tensor $\mathcal{T}$, properly unfolded, by two random matrices, one on the left and one on the right, for each node in the index tree.  
However, this procedure does not fully exploit the capabilities of the generalized Nyström algorithm which allows for sequentially reducing the size of the tensor involved in the sketching procedure, still requiring only one pass over the data and preserving its streamability \cite{bucci2024sequential}. 

Henceforth, in this section, we present the sequential tree tensor network Nyström (STTNN) approximant, a sequential variant of the TTNN method that is still streamable and single-pass.
To avoid delving into technical details, we will not provide a formal description or pseudocode for the STTNN algorithm. Instead, we offer a step-by-step overview.

In TTNN, the projectors $P_{\ell, k}$ take the form \eqref{projectors}.
In the STTNN algorithm, we just replace these projections with cheaper ones, by replacing the matrices $\mathcal{T}^{\ell, k}$ with progressively smaller ones as we compute new projections and new tensor contractions are introduced. This process is similar to what is done in the sequential multilinear Nystr\"om method \cite{bucci2024sequential}, but with notable differences due to the hierarchical structure of the problem.

The key idea is that whenever two sets of indices are disjoint, we can exploit previously computed contractions to reduce the computation costs. Let us clarify this point. Let \(I_{\ell_1, k_1}\) and \(I_{\ell_2, k_2}\) be two disjoint sets of indices, and consider the following possible approximations of \(\mathcal{T}^{\ell_2, k_2}\)
\begin{equation}\label{eq: TTNN example proj}
\mathcal{T}^{\ell_2, k_2} \approx \mathcal{T}^{\ell_2, k_2} X_{\ell_2, k_2} \left( Y_{\ell_2, k_2}^T \mathcal{T}^{\ell_2, k_2} X_{\ell_2, k_2} \right)^\dagger Y_{\ell_2, k_2}^T \mathcal{T}^{\ell_2, k_2}
\end{equation}
and
\begin{equation} \label{eq: STTN example proj}
\mathcal{T}^{\ell_2, k_2} \approx (\mathcal{T} \times_{\ell_1, k_1} Y_{\ell_1, k_1}^T)^{\ell_2, k_2} X_{\ell_2, k_2} \left( Y_{\ell_2, k_2}^T (\mathcal{T} \times_{\ell_1, k_1} Y_{\ell_1,k_1}^T)^{\ell_2, k_2} X_{\ell_2, k_2} \right)^\dagger Y_{\ell_2, k_2}^T \mathcal{T}^{\ell_2, k_2},
\end{equation}
where with a slight abuse of notation in \eqref{eq: STTN example proj}, we used  $\mathcal{T} \times_{\ell_1, k_1} Y_{\ell_1, k_1}^T$ to denote the contraction of $\mathcal{T}$ along $I_{\ell_1, k_1}$ with the first dimension of $Y_{\ell_1, k_1}$ and with $X_{\ell_2,k_2}$ a suitably sized DRM (with fewer rows than in TTNN).
Both expressions are generalized Nyström approximations, differing only in the sketching matrices used, and should therefore yield comparable results.
However, since the second approximation leverages the precomputed contracted tensor $\mathcal{T} \times_{\ell_1, k_1} Y_{\ell_1, k_1}^T$ instead of the full tensor $\mathcal{T}$, constructing the projection
\begin{equation}\label{eq: 1sketch}
P_{\ell_2, k_2} := (\mathcal{T} \times_{\ell_1, k_1} Y_{\ell_1, k_1}^T)^{\ell_2, k_2} X_{\ell_2, k_2} \left( Y_{\ell_2, k_2}^T (\mathcal{T} \times_{\ell_1, k_1} Y_{\ell_1, k_1}^T)^{\ell_2, k_2} X_{\ell_2, k_2} \right)^\dagger Y_{\ell_2, k_2}^T
\end{equation}
is computationally cheaper. All these observations can be extended to more sets of disjoint indices. For instance, if $I_{\ell_3, k_3}$ is disjoint from both $I_{\ell_1, k_1}$ and $I_{\ell_2, k_2}$, the projection $P_{\ell_3, k_3}$ may involve the tensor $\mathcal{T} \times_{\ell_1, k_1} Y_{\ell_1, k_1}^T\times_{\ell_2, k_2} Y_{\ell_2, k_2}^T$.

So in the STTNN algorithm, we replace the TTNN projections with projections involving tensors previously contracted. This introduces the sequential structure to the process. Denoting with $S_{\ell, k}=\{I_{\ell_1, k_1}, \dots, I_{\ell_s, k_s}\}$ the set of indices disjoint from $I_{\ell, k}$ for which we have already computed the contractions and with $\mathcal{T}_{S_{\ell, k}}:=\mathcal{T}\times_{(\ell_i, k_i)\in S_{\ell, k}} Y_{\ell_i, k_i}^T$ the tensor contracted along the set of indices in $S_{\ell, k}$, the projections in the STTNN algorithm would have the following form
\begin{equation}\label{eq: general_seq_projection}
    P_{\ell, k} = \mathcal{T}_{S_{\ell,k}}^{\ell, k}X_{\ell, k}(Y^T_{\ell, k}\mathcal{T}_{S_{\ell,k}}^{\ell, k} X_{\ell, k})^\dagger Y_{\ell, k}^T.
\end{equation}
Although the individual projections will differ from those used in TTNN, the hierarchical structure remains identical, and the following formula for the STTNN approximant applies
\begin{equation}\label{STTNN approximant projector}
    \widehat{\mathcal{T}}^{D}_{STTNN} = \left(\prod_{\ell=L}^1 \left(\mathop{\otimes}\limits_{k=1}^{{K_\ell}} P_{\ell, k}\right)\right)\mathcal{T}^D.
\end{equation}

Thus, summarizing, the STTNN algorithm selects an ordering of the nodes and computes the projections $P_{\ell, k}$, taking advantage of all previously computed contractions with indices disjoint from $I_{\ell, k}$. It is evident that the ordering of the nodes is fundamental to the efficiency of the STTNN algorithm. However, determining the optimal ordering presents a combinatorial challenge and may vary depending on the specific problem at hand \cite{stoian2024optimal}. In this context, we propose a reasonable and intuitive ordering. In our approach, we follow a lexicographic ordering of the nodes. We begin with the first level, starting from \((1,1)\), then proceed to \((1,2)\), and continue in this manner until the last node of the first level. After completing the first level, we move on to the second level, starting with \((2,1)\), followed by \((2,2)\), and so on. This process is repeated until we reach the final node of the last level. For a visual representation of which contractions can be leveraged for a given projection, see Figure \ref{fig: STTNN_indices_ordering}. 

\begin{figure}[ht!]
    \centering
    \begin{tikzpicture}[scale=0.7]
    % Draw the tikzcd diagram
    \node (I01) at (0, 0) {$I_{0,1}$};
    \node (I11) at (-2, -1.3) {$I_{1,1}$};
    \node (I12) at (0, -1.3) {$I_{1,2}$};
    \node (I13) at (2, -1.3) {$I_{1,3}$};
    \node (I21) at (-3, -2.6) {$I_{2,1}$};
    \node (I22) at (-1, -2.6) {$I_{2,2}$};
    \node (I23) at (1, -2.6) {$I_{2,3}$};
    \node (I24) at (3, -2.6) {$I_{2,4}$};
    \node (I31) at (-4, -3.9) {$I_{3,1}$};
    \node (I32) at (-2, -3.9) {$I_{3,2}$};

    % Draw arrows
    \draw[-] (I01) -- (I11);
    \draw[-] (I01) -- (I12);
    \draw[-] (I01) -- (I13);
    \draw[-] (I11) -- (I21);
    \draw[-] (I11) -- (I22);
    \draw[-] (I13) -- (I23);
    \draw[-] (I13) -- (I24);
    \draw[-] (I21) -- (I31);
    \draw[-] (I21) -- (I32);

    % Draw a circle around the node I_{0,1}
    \draw[thick] (I11) circle (0.45cm);
\end{tikzpicture}~\quad~\begin{tikzpicture}[scale=0.7]
    % Draw the tikzcd diagram
    \node (I01) at (0, 0) {$I_{0,1}$};
    \node (I11) at (-2, -1.3) {$I_{1,1}$};
    \node (I12) at (0, -1.3) {$I_{1,2}$};
    \node (I13) at (2, -1.3) {$I_{1,3}$};
    \node (I21) at (-3, -2.6) {$I_{2,1}$};
    \node (I22) at (-1, -2.6) {$I_{2,2}$};
    \node (I23) at (1, -2.6) {$I_{2,3}$};
    \node (I24) at (3, -2.6) {$I_{2,4}$};
    \node (I31) at (-4, -3.9) {$I_{3,1}$};
    \node (I32) at (-2, -3.9) {$I_{3,2}$};

    % Draw arrows
    \draw[-] (I01) -- (I11);
    \draw[-] (I01) -- (I12);
    \draw[-] (I01) -- (I13);
    \draw[-] (I11) -- (I21);
    \draw[-] (I11) -- (I22);
    \draw[-] (I13) -- (I23);
    \draw[-] (I13) -- (I24);
    \draw[-] (I21) -- (I31);
    \draw[-] (I21) -- (I32);

    % Draw a circle around the node I_{0,1}
    \draw[thick] (I12) circle (0.45cm);
     \draw[thick] (I11) ++(-0.5,-0.3) rectangle ++(1,0.7);
\end{tikzpicture}\\
    \begin{tikzpicture}[scale=0.7]
    % Draw the tikzcd diagram
    \node (I01) at (0, 0) {$I_{0,1}$};
    \node (I11) at (-2, -1.3) {$I_{1,1}$};
    \node (I12) at (0, -1.3) {$I_{1,2}$};
    \node (I13) at (2, -1.3) {$I_{1,3}$};
    \node (I21) at (-3, -2.6) {$I_{2,1}$};
    \node (I22) at (-1, -2.6) {$I_{2,2}$};
    \node (I23) at (1, -2.6) {$I_{2,3}$};
    \node (I24) at (3, -2.6) {$I_{2,4}$};
    \node (I31) at (-4, -3.9) {$I_{3,1}$};
    \node (I32) at (-2, -3.9) {$I_{3,2}$};

    % Draw arrows
    \draw[-] (I01) -- (I11);
    \draw[-] (I01) -- (I12);
    \draw[-] (I01) -- (I13);
    \draw[-] (I11) -- (I21);
    \draw[-] (I11) -- (I22);
    \draw[-] (I13) -- (I23);
    \draw[-] (I13) -- (I24);
    \draw[-] (I21) -- (I31);
    \draw[-] (I21) -- (I32);

    % Draw a circle around the node I_{0,1}
    \draw[thick] (I13) circle (0.45cm);
    \draw[thick] (I11) ++(-0.5,-0.3) rectangle ++(1,0.7);
    \draw[thick] (I12) ++(-0.5,-0.3) rectangle ++(1,0.7);
\end{tikzpicture}~\quad~\begin{tikzpicture}[scale=0.7]
    % Draw the tikzcd diagram
    \node (I01) at (0, 0) {$I_{0,1}$};
    \node (I11) at (-2, -1.3) {$I_{1,1}$};
    \node (I12) at (0, -1.3) {$I_{1,2}$};
    \node (I13) at (2, -1.3) {$I_{1,3}$};
    \node (I21) at (-3, -2.6) {$I_{2,1}$};
    \node (I22) at (-1, -2.6) {$I_{2,2}$};
    \node (I23) at (1, -2.6) {$I_{2,3}$};
    \node (I24) at (3, -2.6) {$I_{2,4}$};
    \node (I31) at (-4, -3.9) {$I_{3,1}$};
    \node (I32) at (-2, -3.9) {$I_{3,2}$};

    % Draw arrows
    \draw[-] (I01) -- (I11);
    \draw[-] (I01) -- (I12);
    \draw[-] (I01) -- (I13);
    \draw[-] (I11) -- (I21);
    \draw[-] (I11) -- (I22);
    \draw[-] (I13) -- (I23);
    \draw[-] (I13) -- (I24);
    \draw[-] (I21) -- (I31);
    \draw[-] (I21) -- (I32);

    % Draw a circle around the node I_{0,1}
     \draw[thick] (I21) circle (0.45cm);
\end{tikzpicture}\\
    \begin{tikzpicture}[scale=0.7]
    % Draw the tikzcd diagram
    \node (I01) at (0, 0) {$I_{0,1}$};
    \node (I11) at (-2, -1.3) {$I_{1,1}$};
    \node (I12) at (0, -1.3) {$I_{1,2}$};
    \node (I13) at (2, -1.3) {$I_{1,3}$};
    \node (I21) at (-3, -2.6) {$I_{2,1}$};
    \node (I22) at (-1, -2.6) {$I_{2,2}$};
    \node (I23) at (1, -2.6) {$I_{2,3}$};
    \node (I24) at (3, -2.6) {$I_{2,4}$};
    \node (I31) at (-4, -3.9) {$I_{3,1}$};
    \node (I32) at (-2, -3.9) {$I_{3,2}$};

    % Draw arrows
    \draw[-] (I01) -- (I11);
    \draw[-] (I01) -- (I12);
    \draw[-] (I01) -- (I13);
    \draw[-] (I11) -- (I21);
    \draw[-] (I11) -- (I22);
    \draw[-] (I13) -- (I23);
    \draw[-] (I13) -- (I24);
    \draw[-] (I21) -- (I31);
    \draw[-] (I21) -- (I32);

    % Draw a circle around the node I_{0,1}
    \draw[thick] (I22) circle (0.45cm);
     \draw[thick] (I21) ++(-0.5,-0.3) rectangle ++(1,0.7);
\end{tikzpicture}~\quad~\begin{tikzpicture}[scale=0.7]
    % Draw the tikzcd diagram
    \node (I01) at (0, 0) {$I_{0,1}$};
    \node (I11) at (-2, -1.3) {$I_{1,1}$};
    \node (I12) at (0, -1.3) {$I_{1,2}$};
    \node (I13) at (2, -1.3) {$I_{1,3}$};
    \node (I21) at (-3, -2.6) {$I_{2,1}$};
    \node (I22) at (-1, -2.6) {$I_{2,2}$};
    \node (I23) at (1, -2.6) {$I_{2,3}$};
    \node (I24) at (3, -2.6) {$I_{2,4}$};
    \node (I31) at (-4, -3.9) {$I_{3,1}$};
    \node (I32) at (-2, -3.9) {$I_{3,2}$};

    % Draw arrows
    \draw[-] (I01) -- (I11);
    \draw[-] (I01) -- (I12);
    \draw[-] (I01) -- (I13);
    \draw[-] (I11) -- (I21);
    \draw[-] (I11) -- (I22);
    \draw[-] (I13) -- (I23);
    \draw[-] (I13) -- (I24);
    \draw[-] (I21) -- (I31);
    \draw[-] (I21) -- (I32);

    % Draw a circle around the node I_{0,1}
        \draw[thick] (I23) circle (0.45cm);
     \draw[thick] (I11) ++(-0.5,-0.3) rectangle ++(1,0.7);
     \draw[thick] (I12) ++(-0.5,-0.3) rectangle ++(1,0.7);
\end{tikzpicture}\\
\begin{tikzpicture}[scale=0.7]
    % Draw the tikzcd diagram
    \node (I01) at (0, 0) {$I_{0,1}$};
    \node (I11) at (-2, -1.3) {$I_{1,1}$};
    \node (I12) at (0, -1.3) {$I_{1,2}$};
    \node (I13) at (2, -1.3) {$I_{1,3}$};
    \node (I21) at (-3, -2.6) {$I_{2,1}$};
    \node (I22) at (-1, -2.6) {$I_{2,2}$};
    \node (I23) at (1, -2.6) {$I_{2,3}$};
    \node (I24) at (3, -2.6) {$I_{2,4}$};
    \node (I31) at (-4, -3.9) {$I_{3,1}$};
    \node (I32) at (-2, -3.9) {$I_{3,2}$};

    % Draw arrows
    \draw[-] (I01) -- (I11);
    \draw[-] (I01) -- (I12);
    \draw[-] (I01) -- (I13);
    \draw[-] (I11) -- (I21);
    \draw[-] (I11) -- (I22);
    \draw[-] (I13) -- (I23);
    \draw[-] (I13) -- (I24);
    \draw[-] (I21) -- (I31);
    \draw[-] (I21) -- (I32);

    % Draw a circle around the node I_{0,1}
        \draw[thick] (I24) circle (0.45cm);
    \draw[thick] (I11) ++(-0.5,-0.3) rectangle ++(1,0.7);
    \draw[thick] (I12) ++(-0.5,-0.3) rectangle ++(1,0.7);
    \draw[thick] (I23) ++(-0.5,-0.3) rectangle ++(1,0.7);
\end{tikzpicture}~\quad~\begin{tikzpicture}[scale=0.7]
    % Draw the tikzcd diagram
    \node (I01) at (0, 0) {$I_{0,1}$};
    \node (I11) at (-2, -1.3) {$I_{1,1}$};
    \node (I12) at (0, -1.3) {$I_{1,2}$};
    \node (I13) at (2, -1.3) {$I_{1,3}$};
    \node (I21) at (-3, -2.6) {$I_{2,1}$};
    \node (I22) at (-1, -2.6) {$I_{2,2}$};
    \node (I23) at (1, -2.6) {$I_{2,3}$};
    \node (I24) at (3, -2.6) {$I_{2,4}$};
    \node (I31) at (-4, -3.9) {$I_{3,1}$};
    \node (I32) at (-2, -3.9) {$I_{3,2}$};

    % Draw arrows
    \draw[-] (I01) -- (I11);
    \draw[-] (I01) -- (I12);
    \draw[-] (I01) -- (I13);
    \draw[-] (I11) -- (I21);
    \draw[-] (I11) -- (I22);
    \draw[-] (I13) -- (I23);
    \draw[-] (I13) -- (I24);
    \draw[-] (I21) -- (I31);
    \draw[-] (I21) -- (I32);

    % Draw a circle around the node I_{0,1}
    \draw[thick] (I31) circle (0.45cm);
\end{tikzpicture}\\
\begin{tikzpicture}[scale=0.7]
    % Draw the tikzcd diagram\node (I01) at (0, 0) {$I_{0,1}$};
    \node (FAKE) at (0, 1) {\phantom{a}};
    \node (I01) at (0, 0) {$I_{0,1}$};
    \node (I11) at (-2, -1.3) {$I_{1,1}$};
    \node (I12) at (0, -1.3) {$I_{1,2}$};
    \node (I13) at (2, -1.3) {$I_{1,3}$};
    \node (I21) at (-3, -2.6) {$I_{2,1}$};
    \node (I22) at (-1, -2.6) {$I_{2,2}$};
    \node (I23) at (1, -2.6) {$I_{2,3}$};
    \node (I24) at (3, -2.6) {$I_{2,4}$};
    \node (I31) at (-4, -3.9) {$I_{3,1}$};
    \node (I32) at (-2, -3.9) {$I_{3,2}$};

    % Draw arrows
    \draw[-] (I01) -- (I11);
    \draw[-] (I01) -- (I12);
    \draw[-] (I01) -- (I13);
    \draw[-] (I11) -- (I21);
    \draw[-] (I11) -- (I22);
    \draw[-] (I13) -- (I23);
    \draw[-] (I13) -- (I24);
    \draw[-] (I21) -- (I31);
    \draw[-] (I21) -- (I32);

    % Draw a circle around the node I_{0,1}
    \draw[thick] (I32) circle (0.45cm);
    \draw[thick] (I31) ++(-0.5,-0.3) rectangle ++(1,0.7);
\end{tikzpicture}
    \caption{Tree diagrams illustrating the ordering of nodes for which projections are computed (circled) and the indices of previously computed contractions that can be exploited (boxed).}
    \label{fig: STTNN_indices_ordering}
\end{figure}
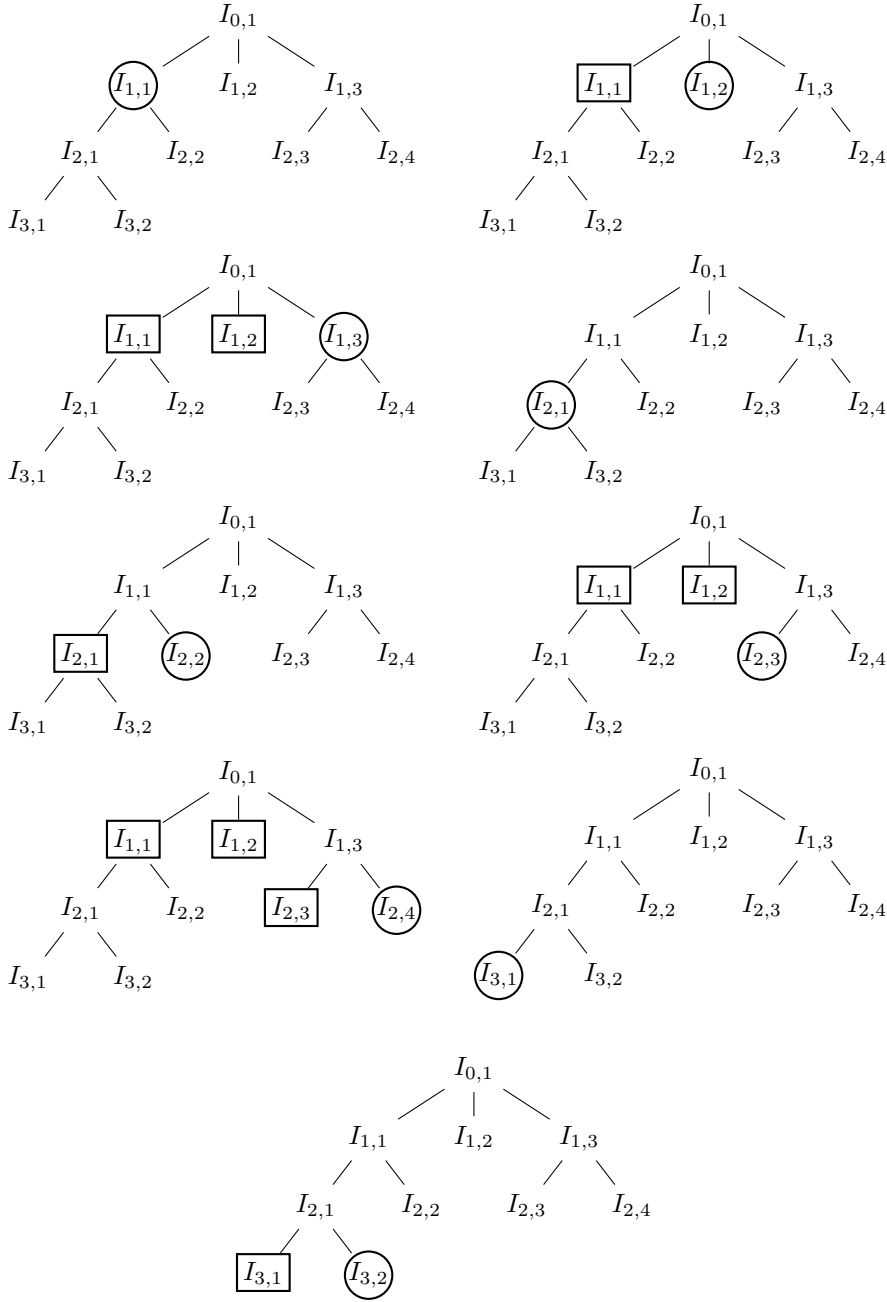
Overall, the STTNN algorithm is computationally more efficient than the TTNN algorithm when approximating dense tensors in the TTNN format. However, the situation becomes less straightforward when the tensor possesses a specific structure. For example, in the case of sparse tensors, a random contraction could yield a smaller yet dense tensor, potentially limiting the efficiency gains or even leading to a negative impact on performance. Another limitation of the STTNN algorithm is its reduced parallelizability. In TTNN, all sketchings can be computed independently, whereas in the sequential approach, the tensors must be processed one after the other. Nevertheless, this drawback does not compromise the streamability of the method, as the sketching phase remains linear in the input data.
%-----------
\section{Error analysis}\label{sec:error analysis}

\noindent This section aims to show theoretical bounds on the accuracy of TTNN and STTNN approximants. In the first subsection, we provide a deterministic analysis, offering general guarantees that apply regardless of the specific choice of DRMs. In the second subsection, we delve into the case of Gaussian DRMs, where a more refined and precise analysis becomes feasible. Finally, in the last subsection, we discuss the error behavior of STTNN.
\subsection{Determistic analysis of TTNN}\label{sec:deterministic}
\par
Our approach mirrors the one used in \cite{kressner2023streaming} to derive bounds for STTA. In particular, we will utilize the following lemmas, closely adapted from the original source.

\begin{lemma}\label{lemma:removable_projections}
    Given $A\in \mathbb{R}^{m\times n}$, $B\in \mathbb{R}^{m \times q}$, $X\in \mathbb{R}^{n\times r}$ and $Y\in\mathbb{R}^{m\times (r+ p)}$, let $P = A X (Y^T {A} X)^\dagger Y^T $. Then, if $Y^T {A} X$ has full column rank
    \begin{equation}
        \|PB\|_F\leq \|{B}\|_F\left(1+ \|(Y^TQ)^\dagger (Y^T {Q}_\perp)\|_2\right),
    \end{equation}
    where $Q := \mathrm{orth}({AX})$.
\end{lemma}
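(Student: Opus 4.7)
The plan is to reduce the oblique projector $P$ to a simpler form and then decompose $B$ into its components parallel and orthogonal to $\mathrm{Range}(Q)$.

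First, I would write $AX = QR$ with $R$ square (since $Y^T AX$ has full column rank, so does $AX$, hence $R$ is invertible). Using the identity $(MN)^\dagger = N^{-1} M^\dagger$, valid when $N$ is square invertible and $M$ has full column rank (and $M = Y^TQ$ does, because $Y^T AX = (Y^T Q) R$ has full column rank and $R$ is invertible), I obtain
\[
P \;=\; AX(Y^T AX)^\dagger Y^T \;=\; QR\,R^{-1}(Y^T Q)^\dagger Y^T \;=\; Q (Y^T Q)^\dagger Y^T.
\]
Because $Q$ has orthonormal columns, this gives the clean identity $\|PB\|_F = \|(Y^T Q)^\dagger Y^T B\|_F$.

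Next, I would use the orthogonal decomposition $B = QQ^T B + Q_\perp Q_\perp^T B$, which yields
\[
(Y^T Q)^\dagger Y^T B \;=\; (Y^T Q)^\dagger (Y^T Q)\, Q^T B \;+\; (Y^T Q)^\dagger (Y^T Q_\perp)\, Q_\perp^T B.
\]
Since $Y^TQ$ has full column rank, $(Y^TQ)^\dagger (Y^T Q) = I$, so the first term collapses to $Q^T B$. Applying the triangle inequality, submultiplicativity of norms ($\|\cdot\|_F \le \|\cdot\|_2\|\cdot\|_F$), and the contractivity $\|Q^T B\|_F \le \|B\|_F$, $\|Q_\perp^T B\|_F \le \|B\|_F$ then produces
\[
\|PB\|_F \;\le\; \|Q^T B\|_F + \|(Y^T Q)^\dagger (Y^T Q_\perp)\|_2\,\|Q_\perp^T B\|_F \;\le\; \|B\|_F\bigl(1 + \|(Y^T Q)^\dagger (Y^T Q_\perp)\|_2\bigr).
\]

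There is no real obstacle here beyond being careful in step one with the pseudoinverse identity: one has to check that $Y^TQ$ genuinely has full column rank, which requires unpacking that the full-column-rank hypothesis on $Y^T AX$ implies the full-column-rank of $AX$ (so $R$ is invertible) and then transfers to $Y^T Q$. Once that algebraic reduction is in place, the rest is a short orthogonal-decomposition and triangle-inequality argument.
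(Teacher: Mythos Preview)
Your proof is correct and follows essentially the same route as the paper's: reduce $P$ to $Q(Y^TQ)^\dagger Y^T$ via the QR factorization of $AX$, then split $B$ along $\mathrm{Range}(Q)$ and its orthogonal complement and apply the triangle inequality. If anything, you are slightly more careful than the paper in explicitly justifying the pseudoinverse identity $(Y^TQR)^\dagger = R^{-1}(Y^TQ)^\dagger$ and the full-column-rank of $Y^TQ$.
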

\begin{proof}
Since $Y^TAX$ is full column rank, so is $AX$ and the QR decomposition $AX=QR$ produces an invertible factor $R$. Hence, we have
\begin{equation}
    P=AX(Y^TAX)^\dagger Y^T= QR(Y^TQR)^\dagger  Y^T = Q(Y^TQ)^\dagger Y^T.
\end{equation}
By completing $Q$ to a square orthogonal matrix $\begin{bmatrix}
    Q & Q_{\perp} 
\end{bmatrix}$ and using $QQ^T + Q_{\perp}Q_{\perp}^T=I$, we have
\begin{align*}
\|{P}{B}\|_F&=\|Q (Y^TQ )^\dagger Y^T {B} \|_F \\
&=\|(Y^TQ)^\dagger (Y^T Q_\perp)({Q}_\perp^T {B}) +(Y^TQ)^\dagger (Y^T {Q})(Q^T {B})\|_F \\
&\leq \|(Y^TQ)^\dagger (Y^T {Q}_\perp)({Q}_\perp^T {B})
\|_F+\|Q^T{B}\|_F \\
& \leq \|{B}\|_F\|(Y^TQ)^\dagger (Y^T {Q}_\perp)\|_2+\|{B}\|_F\\
& = \|{B}\|_F\left(1+\|(Y^TQ)^\dagger (Y^T {Q}_\perp)\|_2\right),
\end{align*}
where we used that $(Y^TQ)^\dagger(Y^TQ)=I$ by rank hypothesis. 
\end{proof} 
The following result is a generalization of \cite[Proposition~3.1]{kressner2023streaming}.

\begin{lemma}\label{lemma:sum_TTNN}
The approximation \(\widehat{\mathcal{T}}\) returned by TTNN satisfies the following inequality
\begin{equation}\label{eq:sum_TTNN}
\|\mathcal{T} - \widehat{\mathcal{T}}\|_F \leq \sum_{\ell=1}^L \sum_{k=1}^{K_\ell} \left\|\left(\mathop{\otimes}\limits_{j=1}^{K_L} P_{L,j}\right) \dots \left(\mathop{\otimes}\limits_{j=1}^{K_{\ell+1}} P_{\ell+1,j}\right) \left(
 I \otimes (I - P_{\ell,k}) \mathop{\otimes}\limits_{j=k+1}^{K_\ell} P_{\ell,j}  \right) \mathcal{T}^D\right\|_F,
\end{equation}
where the empty Kronecker product is understood to be omitted and the size of the identity matrices $I$ is such that all matrix products are well defined.
\end{lemma}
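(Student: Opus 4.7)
The plan is to start from the identity $\mathcal{T}^D - \widehat{\mathcal{T}}^D = (I - \Pi_L \Pi_{L-1} \cdots \Pi_1)\,\mathcal{T}^D$, where I abbreviate $\Pi_\ell := \mathop{\otimes}\limits_{k=1}^{K_\ell} P_{\ell,k}$, so that by \eqref{TTNN approximant projector} we have $\widehat{\mathcal{T}}^D = \Pi_L \cdots \Pi_1 \, \mathcal{T}^D$. I would then decompose the error operator $I - \Pi_L \cdots \Pi_1$ into a double sum indexed by the nodes of the index tree, by applying two successive telescoping identities, and conclude with the triangle inequality for the Frobenius norm (using that matricization is an isometry, so $\|\mathcal{T} - \widehat{\mathcal{T}}\|_F = \|\mathcal{T}^D - \widehat{\mathcal{T}}^D\|_F$).

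First, I would telescope across the $L$ levels via
\[
I - \Pi_L \Pi_{L-1} \cdots \Pi_1 \;=\; \sum_{\ell=1}^{L} \Pi_L \Pi_{L-1} \cdots \Pi_{\ell+1}\,(I - \Pi_\ell),
\]
with the empty product ($\ell = L$) interpreted as the identity. This is an easy induction on $L$, or equivalently the telescoping sum $(I-\Pi_L) + \Pi_L(I-\Pi_{L-1}) + \cdots + \Pi_L \cdots \Pi_2 (I - \Pi_1)$.

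Next, within each level $\ell$, I would peel off the projectors one at a time by showing that
\[
I - \mathop{\otimes}\limits_{k=1}^{K_\ell} P_{\ell,k} \;=\; \sum_{k=1}^{K_\ell} I \otimes (I - P_{\ell,k}) \mathop{\otimes}\limits_{j=k+1}^{K_\ell} P_{\ell,j},
\]
where the leading $I$ factor has the same size as $\mathop{\otimes}\limits_{j=1}^{k-1} P_{\ell,j}$ (so for $k=1$ it is omitted). This is a routine induction on $K_\ell$ based on the elementary identity $I - A \otimes B = (I - A) \otimes B + A \otimes (I - B)$. Substituting this into the level-wise telescoping, applying the resulting operator to $\mathcal{T}^D$, and using the triangle inequality yields exactly \eqref{eq:sum_TTNN}.

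The main obstacle is not conceptual but bookkeeping: I need to verify that the sizes of the identity factors in the inner expansion line up so that each summand composes correctly with $\Pi_L \cdots \Pi_{\ell+1}$ on the left and with $\mathcal{T}^D$ on the right, and in particular that the first and last values of $k$ (where the leading $I$ or the trailing Kronecker product is empty) are handled by the conventions stated in the lemma. Once these size checks are in place, the proof reduces to the two elementary telescoping identities above.
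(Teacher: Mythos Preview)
Your proposal is correct and is essentially the same argument as the paper's proof: the paper also peels off the projectors one at a time, first within a level and then across levels, applying the triangle inequality at each step, which amounts precisely to your two telescoping identities followed by a single triangle inequality. One small slip: the ``elementary identity'' you quote should read $I - A\otimes B = (I-A)\otimes B + I\otimes(I-B)$ (with the identity, not $A$, in the second summand); this is in fact what your displayed expansion of $I-\bigotimes_k P_{\ell,k}$ uses and what the induction actually requires.
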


\begin{proof}
We begin by expressing the Frobenius norm of the difference between \(\mathcal{T}\) and \(\widehat{\mathcal{T}}\):

\[
\|\mathcal{T} - \widehat{\mathcal{T}}\|_F = \left\|\mathcal{T}^D - \left(\mathop{\otimes}\limits_{j=1}^{K_L} P_{L,j}\right) \dots \left(\mathop{\otimes}\limits_{j=1}^{K_1} P_{1,j}\right) \mathcal{T}^D \right\|_F.
\]

\noindent Expanding the term $P_{1,1}$, we get

\[
\|\mathcal{T} - \widehat{\mathcal{T}}\|_F = \left\|\mathcal{T}^D - \left(\mathop{\otimes}\limits_{j=1}^{K_L} P_{L,j}\right) \dots \left(\mathop{\otimes}\limits_{j=1}^{K_2} P_{2,j}\right) \left((I + P_{1,1}-\hspace{-1mm} I)\mathop{\otimes}\limits_{j=2}^{K_1}
P_{1,j}\right) \mathcal{T}^D \right\|_F.
\]

\noindent Using the triangle inequality, we can bound the expression by

\begin{align*}
\|\mathcal{T} - \widehat{\mathcal{T}}\|_F& \leq \left\|\mathcal{T}^D - \left(\mathop{\otimes}\limits_{j=1}^{K_L} P_{L,j}\right) \dots \left(\mathop{\otimes}\limits_{j=1}^{K_2} P_{2,j}\right) \left(I\otimes P_{1,2} \otimes \dots \otimes P_{1,K_1}\right) \mathcal{T}^D \right\|_F\\
&+ \left\|\left(\mathop{\otimes}\limits_{j=1}^{K_L} P_{L,j}\right) \dots \left(\mathop{\otimes}\limits_{j=1}^{K_2} P_{2,j}\right) \left((I-P_{1,1})\otimes P_{1,2} \otimes \dots \otimes P_{1,K_1} \right) \mathcal{T}^D \right\|_F.
\end{align*}

\noindent The second term on the right-hand side is one of the summands in the final inequality.
We apply the same reasoning to the first term. By similarly expanding \(P_{1,2} = I + P_{1,2} - I\) and using the subadditivity of the Frobenius norm, we obtain

\begin{align*}
\|\mathcal{T} - \widehat{\mathcal{T}}\|_F& \leq \left\|\mathcal{T}^D - \left(\mathop{\otimes}\limits_{j=1}^{K_L} P_{L,j}\right) \dots \left(\mathop{\otimes}\limits_{j=1}^{K_2} P_{2,j}\right) \left(
 I\otimes I \mathop{\otimes}\limits_{j=3}^{K_1} P_{1,j}\right) \mathcal{T}^D \right\|_F\\
& + \left\| \left(\mathop{\otimes}\limits_{j=1}^{K_L} P_{L,j}\right) \dots \left(\mathop{\otimes}\limits_{j=1}^{K_2} P_{2,j}\right) \left(I \otimes (I-P_{1,2})
 \mathop{\otimes}\limits_{j=3}^{K_1} P_{1,j}\right) \mathcal{T}^D \right\|_F\\
&+ \left\|\left(\mathop{\otimes}\limits_{j=1}^{K_L} P_{L,j}\right) \dots \left(\mathop{\otimes}\limits_{j=1}^{K_2} P_{2,j}\right) \left( (I-P_{1,1})
 \mathop{\otimes}\limits_{j=2}^{K_1} P_{1,j} \right) \mathcal{T}^D \right\|_F.
\end{align*}
This process is iterated  for all the projections at the first level of the index tree, until we reach the following bound
\begin{align*}
\|\mathcal{T} - \widehat{\mathcal{T}}\|_F & \leq \left\|\mathcal{T}^D - \left(\mathop{\otimes}\limits_{j=1}^{K_L} P_{L,j}\right) \dots \left(\mathop{\otimes}\limits_{j=1}^{K_2} P_{2,j}\right) \mathcal{T}^D \right\|_F\\ 
& +  \sum_{k=1}^{K_1} \left\|\left(\mathop{\otimes}\limits_{j=1}^{K_L} P_{L,j}\right) \dots \left(\mathop{\otimes}\limits_{j=1}^{K_2} P_{2,j}\right) \left( I \otimes (I-P_{1,k}) 
 \mathop{\otimes}\limits_{j=k+1}^{K_1} P_{1,j}\right) \mathcal{T}^D\right\|_F.
\end{align*}
Now, we can move to the second level, that is we can iterate the same reasoning to the first term of the previous bound, yielding to

\begin{align*}
\|\mathcal{T} - \widehat{\mathcal{T}}\|_F & \leq \left\|\mathcal{T}^D - \left(\mathop{\otimes}\limits_{j=1}^{K_L} P_{L,j}\right) \dots \left(\mathop{\otimes}\limits_{j=1}^{K_3} P_{3,j}\right) \mathcal{T}^D \right\|_F\\ 
& +  \sum_{\ell=1}^2\sum_{k=1}^{K_\ell} \left\|\left(\mathop{\otimes}\limits_{j=1}^{K_L} P_{L,j}\right) \dots \left(\mathop{\otimes}\limits_{j=1}^{K_{\ell + 1}} P_{\ell +1,j}\right) \left(
 I \otimes (I - P_{\ell,k}) \mathop{\otimes}\limits_{j=k+1}^{K_\ell} P_{\ell,j}\right) \mathcal{T}^D\right\|_F.
\end{align*}
We can now continue inductively until we have accounted for all levels. This leads us to the desired inequality.
\end{proof}

\noindent With the latter lemma, the analysis of the error becomes easier since the individual terms in \eqref{eq:sum_TTNN} are simpler to handle.
Therefore, we will now focus on one of these terms to conduct a more detailed examination.

\begin{lemma}\label{lemma:tau_TTNN} Using the same notation of Lemma~\ref{lemma:sum_TTNN}, let \( Q_{\ell, k} := \mathrm{orth}(\mathcal{T}^{\ell, k}X_{\ell, k}) \). Then, the following inequality holds
    \begin{align*}   &\left\|\left(\mathop{\otimes}\limits_{j=1}^{K_L} P_{L,j}\right) \dots \left(\mathop{\otimes}\limits_{j=1}^{K_{\ell+1}} P_{\ell+1,j}\right) 
    \left(I \otimes (I - P_{\ell,k})\mathop{\otimes}\limits_{j=k+1}^{K_\ell}P_{\ell,j}  \right) 
    \mathcal{T}^D\right\|_F \\
        &\qquad \leq \left(\prod_{j=1}^{K_L} \eta_{L,j}\right) \dots \left(\prod_{j=1}^{K_{\ell+1}} \eta_{\ell+1,j}\right) \left(\prod_{j=k+1}^{K_\ell} \eta_{\ell,j}\right) \|(I - P_{\ell,k})\mathcal{T}^{\ell,k}\|_F,
    \end{align*}
where
\[ \eta_{\ell,k} = 1 + \|(Y_{\ell, k}^T Q_{\ell, k})^\dagger Y_{\ell, k}^T Q_{\ell, k}^\perp\|_2 .\]
\end{lemma}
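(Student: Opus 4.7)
The plan is to iteratively apply Lemma~\ref{lemma:removable_projections} to peel off each projection appearing in the target expression, one at a time, accumulating a factor of $\eta_{\ell',j'}$ whenever a projection $P_{\ell',j'}$ is removed.

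The key building block is the reshape identity relating Kronecker-structured linear maps to mode-wise matrix multiplication: for any vector $v \in \mathbb{R}^{n_D}$ and any node $(\ell',j')$, the single-mode factor
\[
    M_{\ell',j'} \;:=\; I \otimes \cdots \otimes I \otimes P_{\ell',j'} \otimes I \otimes \cdots \otimes I,
\]
acting on $v$ is equivalent to reshaping $v$ as a matrix $V$ whose rows are indexed by $I_{\ell',j'}$, left-multiplying by $P_{\ell',j'}$, and reshaping back. Since reshape preserves the Frobenius norm, Lemma~\ref{lemma:removable_projections} with $A = \mathcal{T}^{\ell',j'}$, $X = X_{\ell',j'}$, $Y = Y_{\ell',j'}$, and $B = V$ gives
\[
    \|M_{\ell',j'}\, v\|_F \;=\; \|P_{\ell',j'}\, V\|_F \;\leq\; \eta_{\ell',j'}\,\|V\|_F \;=\; \eta_{\ell',j'}\,\|v\|_F,
\]
under the standing assumption that $Y_{\ell',j'}^T\mathcal{T}^{\ell',j'}X_{\ell',j'}$ has full column rank.

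Next I would split each full Kronecker-product-of-projections into an ordered product of single-mode factors, $\bigotimes_{j=1}^{K_{\ell'}} P_{\ell',j} = \prod_{j=1}^{K_{\ell'}} M_{\ell',j}$, noting that the $M_{\ell',j}$ at a fixed level $\ell'$ commute since they act on disjoint mode-groups. I would then peel projections from the outermost (leftmost) factor inward: first $M_{L,1},\dots,M_{L,K_L}$, then $M_{L-1,1},\dots,M_{L-1,K_{L-1}}$, continuing through $M_{\ell+1,1},\dots,M_{\ell+1,K_{\ell+1}}$, and finally at level $\ell$ peeling only $M_{\ell,k+1},\dots,M_{\ell,K_\ell}$. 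After all peelings, the accumulated prefactor is exactly $\prod_{\ell'=\ell+1}^{L}\prod_{j=1}^{K_{\ell'}}\eta_{\ell',j}\cdot\prod_{j=k+1}^{K_\ell}\eta_{\ell,j}$, and what remains is
\[
    \bigl\|\bigl(I \otimes (I-P_{\ell,k}) \otimes I \otimes \cdots \otimes I\bigr)\,\mathcal{T}^D\bigr\|_F.
\]
One final use of the reshape identity (now with $I-P_{\ell,k}$ in the active slot, and no norm bounding) rewrites this as $\|(I-P_{\ell,k})\,\mathcal{T}^{\ell,k}\|_F$, giving the claim.

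The main obstacle I anticipate is stating the Kronecker-to-matricization reshape identity precisely enough to justify that peeling off a single-mode factor $M_{\ell',j'}$ is equivalent to left-multiplication of an appropriate unfolding by $P_{\ell',j'}$, with the correct mode assignment and ordering; once that identity is in place, the rest is a direct induction on projections using Lemma~\ref{lemma:removable_projections}. A secondary bookkeeping point is to record explicitly the full-column-rank hypothesis on each $Y_{\ell',j'}^T\mathcal{T}^{\ell',j'}X_{\ell',j'}$ needed at each peeling step.
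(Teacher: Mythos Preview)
Your proposal is correct and follows essentially the same approach as the paper: iteratively peel off one projection at a time via the matricization/reshape identity and apply Lemma~\ref{lemma:removable_projections} to accumulate the $\eta_{\ell',j'}$ factors, until only $(I-P_{\ell,k})$ acting on $\mathcal{T}^{\ell,k}$ remains. The paper's proof is terser about the reshape step (it just writes $\mathcal{B}^{L,1}=P_{L,1}\mathcal{C}^{L,1}$ and iterates), but the argument is the same.
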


\begin{proof}
    Define \(\mathcal{B}\) as a tensor such that its vectorization is equal to the expression we wish to bound, i.e.,
    \[
    \mathcal{B}^D = \left(\mathop{\otimes}\limits_{j=1}^{K_L} P_{L,j}\right) \dots \left(\mathop{\otimes}\limits_{j=1}^{K_{\ell+1}} P_{\ell+1,j}\right) \left(I \otimes (I - P_{\ell,k})\mathop{\otimes}\limits_{j=k+1}^{K_\ell}P_{\ell,j} \right)  \mathcal{T}^D.
    \]
    Then, consider the matricization \(\mathcal{B}^{L,1}\), which can be expressed as \( P_{L,1} \mathcal{C}^{L,1} \) for some tensor \(\mathcal{C}\). By applying Lemma~\ref{lemma:removable_projections}, we have
    \[
    \|P_{L,1} \mathcal{C}^{L,1}\|_F \leq \eta_{L,1} \|\mathcal{C}\|_F.
    \]
    Similarly, the next matricization \(\mathcal{C}^{L,2}\) can be written as \( P_{L,2} \mathcal{D}^{L,2} \) for another tensor \(\mathcal{D}\), and again by Lemma~\ref{lemma:removable_projections}, we obtain
    \[
    \|P_{L,2} \mathcal{D}^{L,2}\|_F \leq \eta_{L,2} \|\mathcal{D}\|_F.
    \]
    Iterating this procedure for each subsequent projection yields the desired bound.
\end{proof}

We are ready to prove the main deterministic bound for TTNN.

\begin{thm}[Deterministic accuracy bound for TTNN]\label{thm:TTNN_deterministic_bound}
Let $\mathcal{T}\in\mathbb{R}^{n_1 \times\dots\times n_d} $
and $\widehat{\mathcal{T}}$ be the TTNN approximant of $\mathcal{T}$ with index tree $\mathcal{I}$, TTN ranks $\mathcal{R}$, TTN oversamples $\mathcal{P}$ and sketchings $X_{\ell, k}$, $Y_{\ell, k}$ defined in \eqref{sketch}. Then for any TTN ranks $\hat{\mathcal{R}}$ such that $\hat{\mathcal{R}}<\mathcal{R}$, setting
\\
 \begin{itemize}
     \item $\rho_{\ell, k} := \sqrt{1 + \|\widehat{V}_{\ell, k\perp}^{T}X_{\ell,k}(\widehat{V}_{\ell, k}^T X_{\ell, k})^\dagger\|_2^2}$,\\
     \item $\tau_{\ell, k}:= \sqrt{1 + \| (Y_{\ell, k}^T Q_{\ell ,k})^\dagger Y_{\ell, k}^TQ_{\ell, k\perp}\|_2^2}$,\\
     \item $\eta_{\ell,k} = 1 + \|(Y_{\ell, k}^T Q_{\ell, k})^\dagger Y_{\ell, k}^T Q_{\ell, k}^\perp\|_2$.
     \end{itemize}
     \vspace{2mm}
where $\widehat{V}_{\ell, k}$ is an orthogonal matrix with the first $\hat{r}_{\ell, k}<r_{\ell, k}$ right singular vectors of $\mathcal{T}^{\ell, k}$ and $Q_{\ell, k} = \mathrm{orth}(\mathcal{T}^{\ell, k} X_{\ell, k})$, the following holds
 \begin{equation}\label{eq:TTNN_deterministic_bound}
 \|\mathcal{T}-\hat{\mathcal{T}}\|_F \leq \|\mathcal{T}-\mathcal{T}_{\hat{\mathcal{R}}}\|_F\sum_{\ell=1}^{L}\sum_{k=1}^{K_\ell} \rho_{k,\ell}\tau_{k,\ell}\left(\prod_{j=1}^{K_L} \eta_{L,j}\right) \dots \left(\prod_{j=1}^{K_{\ell+1}} \eta_{\ell+1,j}\right) \left(\prod_{j=k+1}^{K_\ell} \eta_{\ell,j}\right).
 \end{equation}
 where $\mathcal{T}_{\hat{\mathcal{R}}}$ is any best TTN approximation of $\mathcal{T}$ of TTN rank $\hat{\mathcal{R}}$.
 \end{thm}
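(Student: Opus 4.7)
The plan is to combine the telescoping decomposition furnished by Lemma \ref{lemma:sum_TTNN} with the per-term contraction bound of Lemma \ref{lemma:tau_TTNN}, reducing the analysis to controlling the single-mode residual $\|(I-P_{\ell,k})\mathcal{T}^{\ell,k}\|_F$ for each node, and then invoking the matrix generalized Nyström error bound on each matricization.

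First, I would apply Lemma \ref{lemma:sum_TTNN} to expand $\|\mathcal{T}-\widehat{\mathcal{T}}\|_F$ as the sum, indexed by $(\ell,k)$, of norms of the terms isolating the factor $(I-P_{\ell,k})$. For each such summand, Lemma \ref{lemma:tau_TTNN} yields
\[
\biggl(\prod_{j=1}^{K_L} \eta_{L,j}\biggr) \cdots \biggl(\prod_{j=1}^{K_{\ell+1}} \eta_{\ell+1,j}\biggr) \biggl(\prod_{j=k+1}^{K_\ell} \eta_{\ell,j}\biggr)\, \|(I - P_{\ell,k})\mathcal{T}^{\ell,k}\|_F,
\]
so the whole problem reduces to a node-wise bound on $\|(I-P_{\ell,k})\mathcal{T}^{\ell,k}\|_F$.

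Next, I would recognise that, by definition \eqref{projectors}, $P_{\ell,k}\mathcal{T}^{\ell,k}$ is precisely the matrix generalized Nyström approximant of $\mathcal{T}^{\ell,k}$ with sketches $X_{\ell,k}$ and $Y_{\ell,k}$. I would then establish the sharpened combined HMT+GN bound
\[
\|(I-P_{\ell,k})\mathcal{T}^{\ell,k}\|_F \leq \rho_{\ell,k}\,\tau_{\ell,k}\, \|\mathcal{T}^{\ell,k} - (\mathcal{T}^{\ell,k})_{\hat{r}_{\ell,k}}\|_F,
\]
where $(\mathcal{T}^{\ell,k})_{\hat{r}_{\ell,k}}$ denotes a best rank-$\hat{r}_{\ell,k}$ approximation of $\mathcal{T}^{\ell,k}$. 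The $\sqrt{1+\|\cdot\|_2^2}$ shape (rather than the looser product form in \eqref{eq: HMT_error_bound}--\eqref{eq: GN_error_bound}) follows by splitting the identity along $\mathrm{range}(\widehat{V}_{\ell,k})$ and $\mathrm{range}(Q_{\ell,k})$ and applying Pythagoras twice, using the full-rank hypothesis on $Y_{\ell,k}^T\mathcal{T}^{\ell,k}X_{\ell,k}$ which is guaranteed almost surely for the DRMs under consideration. This step is where the factors $\rho_{\ell,k}$ and $\tau_{\ell,k}$ arise, exactly matching their definitions in the statement, and it is the technical heart of the argument.

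Finally, I would lift the matrix-level bound back to the tensor. Since $\widehat{\mathcal{R}} < \mathcal{R}$, the best TTN approximant $\mathcal{T}_{\widehat{\mathcal{R}}}$ has its mode-$I_{\ell,k}$ matricization of rank at most $\hat{r}_{\ell,k}$, so Eckart--Young gives
\[
\|\mathcal{T}^{\ell,k} - (\mathcal{T}^{\ell,k})_{\hat{r}_{\ell,k}}\|_F \leq \|\mathcal{T}^{\ell,k} - (\mathcal{T}_{\widehat{\mathcal{R}}})^{\ell,k}\|_F = \|\mathcal{T} - \mathcal{T}_{\widehat{\mathcal{R}}}\|_F,
\]
where the last equality is the isometry of matricization. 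Substituting this uniform bound into each summand of the expansion and collecting the $\eta$, $\rho$, and $\tau$ factors produces \eqref{eq:TTNN_deterministic_bound}. The main obstacle I expect is justifying the sharper $\sqrt{1+\|\cdot\|_2^2}$ form of the matrix generalized Nyström bound carefully, since the GN formula quoted in Section \ref{sec:randomized matrix} is looser; everything else is bookkeeping across the index tree.
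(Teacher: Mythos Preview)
Your proposal is correct and follows essentially the same route as the paper: apply Lemma~\ref{lemma:sum_TTNN}, then Lemma~\ref{lemma:tau_TTNN}, then bound each $\|(I-P_{\ell,k})\mathcal{T}^{\ell,k}\|_F$ via the matrix GN/HMT estimates and pass to $\|\mathcal{T}-\mathcal{T}_{\hat{\mathcal{R}}}\|_F$. Your treatment is in fact slightly more careful than the paper's in two places---you spell out the Pythagorean argument behind the $\sqrt{1+\|\cdot\|_2^2}$ form of $\rho_{\ell,k},\tau_{\ell,k}$ (the paper just cites \eqref{eq: HMT_error_bound}--\eqref{eq: GN_error_bound}), and you make the Eckart--Young step explicit---but these are elaborations of the same argument, not a different approach.
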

\begin{proof}
    By Lemma~\ref{lemma:sum_TTNN}
\[
\|\mathcal{T} - \widehat{\mathcal{T}}\|_F \leq \sum_{\ell=1}^L \sum_{k=1}^{K_\ell} \left\|\left(\mathop{\otimes}\limits_{j=1}^{K_L} P_{L,j}\right) \dots \left(\mathop{\otimes}\limits_{j=1}^{K_{\ell+1}} P_{\ell+1,j}\right) 
\left(I \otimes (I - P_{\ell,k}) \mathop{\otimes}\limits_{j=k+1}^{K_\ell} P_{\ell, j} \right) 
\mathcal{T}^D\right\|_F.
\]

Then, we can use Lemma~\ref{lemma:tau_TTNN} to bound each of the addends, obtaining
\[
\|\mathcal{T} - \widehat{\mathcal{T}}\|_F\leq \sum_{\ell=1}^L \sum_{k=1}^{K_\ell} \left(\prod_{j=1}^{K_L} \eta_{L,j}\right) \dots \left(\prod_{j=1}^{K_{\ell+1}} \eta_{\ell+1,j}\right) \left(\prod_{j=k+1}^{K_\ell} \eta_{\ell,j}\right) \|(I - P_{\ell,k})\mathcal{T}^{\ell,k}\|_F.
\]
The term $\|(I-P_{\ell, k})\mathcal{T}^{\ell, k}\|_F$, by Definition \ref{projectors}, satisfies 
\[
\|(I-P_{\ell, k})\mathcal{T}^{\ell, k}\|_F= \|\mathcal{T}^{\ell, k}- \mathcal{T}^{\ell, k}X_{\ell, k}(Y_{\ell, k}^T \mathcal{T}^{\ell, k}X_{\ell, k})^\dagger Y_{\ell, k}^T\|_F.
\]
The latter is the error of approximation of GN, which - according to \eqref{eq: HMT_error_bound} and \eqref{eq: GN_error_bound} - satisfies
    \[
    \|(I-P_{\ell, k})\mathcal{T}^{\ell, k}\|_F\leq \rho_{\ell, k} \tau_{\ell, k}\|\mathcal{T}^{\ell, k}-\mathcal{T}^{\ell, k}_{\hat{\mathcal{R}}}\|_F=\rho_{\ell, k} \tau_{\ell, k}\|\mathcal{T}-\mathcal{T}_{\hat{\mathcal{R}}}\|_F,
    \]
Putting it all together we showed that
\[
\|\mathcal{T} - \widehat{\mathcal{T}}\|_F\leq \sum_{\ell=1}^L \sum_{k=1}^{K_\ell} \|\mathcal{T}-\mathcal{T}_{\hat{\mathcal{R}}}\|_F\rho_{\ell, k}\tau_{\ell, k}\left(\prod_{j=1}^{K_L} \eta_{L,j}\right) \dots \left(\prod_{j=1}^{K_{\ell+1}} \eta_{\ell+1,j}\right) \left(\prod_{j=k+1}^{K_\ell} \eta_{\ell,j}\right) .
\]
\end{proof}
Note that if we define
\[
\rho := \max_{\ell, k} \rho_{\ell, k}, \quad \tau := \max_{\ell, k} \tau_{\ell, k}, \quad \eta := \max_{\ell, k} \eta_{\ell, k},
\]
then bound \eqref{eq:TTNN_deterministic_bound} simplifies to
\[
\|\mathcal{T} - \widehat{\mathcal{T}}\|_F \leq  \rho \tau \left(1 + \eta + \eta^2 + \dots + \eta^{|\mathcal{I}|-2}\right) \|\mathcal{T}-\mathcal{T}_{\hat{\mathcal{R}}}\|_F =  \rho \tau \left(\sum_{s=1}^{|\mathcal{I}|-2} \eta^s \right)       \|\mathcal{T}-\mathcal{T}_{\hat{\mathcal{R}}}\|_F.
\]

\noindent Theorem~\ref{thm:TTNN_deterministic_bound} suggests that the approximation constant of the TTNN method scales exponentially with the number of nodes $|\mathcal{I}|$.
However, as we will show in section~\ref{sec:experiments}, empirical evidence indicates that, in practice, the approximation constant scales more favorably as observed in \cite{bucci2024multilinear, kressner2023streaming}. 

%------------
\subsection{Probabilistic analysis of TTNN}\label{sec:probabilistic}
In this section, we provide an error analysis in the particular case of Gaussian DRMs $X_{\ell,k},Y_{\ell,k}$. For Gaussian DRMs, it is possible to extend the results of section \ref{sec:randomized matrix}. 
Our approach mirrors the one used in \cite[Theorem 3.2]{kressner2023streaming} to derive bounds for STTA. In particular, we will utilize the following lemma repeatedly in the proof of the expected error bound of Theorem \ref{thm: TTN expected value error}.

\begin{lemma}\label{lemma:removable_projections exp value}
    Given $A\in \mathbb{R}^{m\times n}$, $B\in \mathbb{R}^{m \times n}$, draw standard Gaussian matrices $X$ and $Y$ of size ${n\times r}$ and ${m\times (r+p)}$ respectively, and let $P = A X (Y^T {A} X)^\dagger Y^T $. Then, it holds
    \begin{equation}
        \mathbb{E}\|PB\|_F\leq \left(1+ \sqrt{\frac{r}{p-1}}\right)\|{B}\|_F.
    \end{equation}
\end{lemma}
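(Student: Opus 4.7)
The plan is to refine the inequality of Lemma \ref{lemma:removable_projections} by separating $PB$ into a deterministic part lying in the range of $Q := \mathrm{orth}(AX)$ and a random part coming from its orthogonal complement $Q_\perp$, then taking expectations of each piece via standard Gaussian identities. Assuming $Y^T AX$ has full column rank (which holds almost surely when $\mathrm{rank}(A) \geq r$), the same manipulation as in the proof of Lemma \ref{lemma:removable_projections} gives $P = Q(Y^T Q)^\dagger Y^T$. Inserting $I = QQ^T + Q_\perp Q_\perp^T$ and using $(Y^T Q)^\dagger (Y^T Q) = I$ yields
\begin{equation*}
    PB = QQ^T B + Q(Y^T Q)^\dagger (Y^T Q_\perp) Q_\perp^T B,
\end{equation*}
so by the triangle inequality and the isometry of $Q$,
\begin{equation*}
    \|PB\|_F \leq \|Q^T B\|_F + \|(Y^T Q)^\dagger (Y^T Q_\perp)(Q_\perp^T B)\|_F.
\end{equation*}

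I would then condition on $X$: with $Q$ and $Q_\perp$ deterministic and $[Q, Q_\perp]$ orthogonal, the matrices $G_1 := Y^T Q \in \mathbb{R}^{(r+p)\times r}$ and $G_2 := Y^T Q_\perp \in \mathbb{R}^{(r+p)\times(m-r)}$ are \emph{independent} standard Gaussians. Applying Jensen's inequality and then conditioning further on $G_1$,
\begin{equation*}
    \bigl(\mathbb{E}_{Y\mid X}\|G_1^\dagger G_2 (Q_\perp^T B)\|_F\bigr)^{\!2} \leq \mathbb{E}\bigl[\mathbb{E}_{G_2}\|G_1^\dagger G_2 (Q_\perp^T B)\|_F^2\bigm| G_1\bigr] = \mathbb{E}\|G_1^\dagger\|_F^2\cdot\|Q_\perp^T B\|_F^2,
\end{equation*}
where I used the identity $\mathbb{E}\|M G N\|_F^2 = \|M\|_F^2 \|N\|_F^2$ valid for a standard Gaussian $G$ and deterministic $M, N$. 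The classical formula $\mathbb{E}\|G_1^\dagger\|_F^2 = r/(p-1)$ (for $p \geq 2$) together with $\|Q_\perp^T B\|_F \leq \|B\|_F$ turns the right-hand side into $\sqrt{r/(p-1)}\,\|B\|_F$. Combining with $\|Q^T B\|_F \leq \|B\|_F$ and averaging over $X$ produces the claimed inequality.

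The main subtlety is the conditional-independence step: once $X$ is fixed, $Y^T Q$ and $Y^T Q_\perp$ become \emph{independent} Gaussians, and this decoupling is what ultimately makes the bound dimension-free in $m$. A minor technicality is the rank-deficient case $\mathrm{rank}(A) < r$, which can be dealt with either by a perturbation/continuity argument or by reinterpreting $P$ as an orthogonal projection whose operator norm is bounded by one.
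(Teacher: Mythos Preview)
Your proposal is correct and follows essentially the same route as the paper: both rewrite $P=Q(Y^TQ)^\dagger Y^T$, split $PB$ via $QQ^T+Q_\perp Q_\perp^T$, and then bound the random term by Jensen plus the Gaussian identities $\mathbb{E}\|G_1^\dagger G_2 N\|_F^2=\mathbb{E}\|G_1^\dagger\|_F^2\|N\|_F^2$ and $\mathbb{E}\|G_1^\dagger\|_F^2=r/(p-1)$. Your explicit conditioning on $X$ (so that $Q$ is fixed and $Y^TQ$, $Y^TQ_\perp$ are genuinely independent Gaussians) and your remark on the rank-deficient case are details the paper glosses over, but the argument is otherwise identical.
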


\begin{proof}
As shown in Lemma \ref{lemma:removable_projections}, it holds
\begin{equation*}
    P=AX(Y^TAX)^\dagger Y^T= QR(Y^TQR)^\dagger  Y^T = Q(Y^TQ)^\dagger Y^T
\end{equation*}
and
\begin{equation*}
    \|{P}{B}\|_F \leq \|(Y^TQ)^\dagger (Y^T {Q}_\perp)({Q}_\perp^T {B})
\|_F+\|Q^T{B}\|_F.
\end{equation*}
By linearity of the expected value, we have
\begin{align}\label{eq: exp value projector}
\begin{split}
\mathbb{E}\|{P}{B}\|_F &\leq \mathbb{E}\|(Y^TQ)^\dagger (Y^T {Q}_\perp)({Q}_\perp^T {B})
\|_F+\mathbb{E}\|Q^T{B}\|_F \\
&\leq \mathbb{E}\|(Y^TQ)^\dagger (Y^T {Q}_\perp)({Q}_\perp^T {B})
\|_F+ \|{B}\|_F.
\end{split}
\end{align}

Since $Z_1:=Y^TQ$ and $Z_2:=Y^TQ_\perp$ are independent Gaussian matrices, we can apply Proposition 10.1 and Proposition 10.2 from \cite{halko2011finding} to have
\begin{align*}
    \mathbb{E}_{Z_1,Z_2}\| Z_1^\dagger Z_2 (Q_\perp^T B)\|_F
    &\leq \sqrt{\mathbb{E}_{Z_1,Z_2}\| Z_1^\dagger Z_2 (Q_\perp^T B)\|_F^2} \\
    & = \sqrt{\mathbb{E}_{Z_1} \left[ \mathbb{E}_{Z_2}\| Z_1^\dagger Z_2 (Q_\perp^T B)\|_F^2 \mid Z_1 \right] }\\
    &\leq  
    \sqrt{\mathbb{E}_{Z_1}\| Z_1^\dagger\|_F^2} \|Q_\perp^T B\|_F 
    \leq \sqrt{\frac{r}{p-1}}\|B\|_F.
\end{align*}
Insert this expression into \eqref{eq: exp value projector} to complete the proof.
\end{proof}

\begin{thm}\label{thm: TTN expected value error}
Let $\mathcal{T}\in\mathbb{R}^{n_1 \times\dots\times n_d} $
and $\widehat{\mathcal{T}}$ be the TTNN approximant of $\mathcal{T}$ with index tree $\mathcal{I}$, TTN ranks $\mathcal{R}$, TTN oversamples $\mathcal{P}$ and independent standard Gaussian DRMs $X_{\ell, k}$, $Y_{\ell, k}$ defined in \eqref{sketch}. Then for any TTN ranks $\hat{\mathcal{R}}$ such that $\hat{r}_{\ell,k}<r_{\ell,k}-1$, we have
\begin{align}
    \textstyle
     \mathbb{E} \|\mathcal{T} - \hat{\mathcal{T}}  \|_F 
    & \leq   \sum_{\ell=1}^{L} \sum_{k=1}^{K_\ell}  
   \left[\displaystyle\prod_{(t,s)>(\ell, k)} \hspace{-3mm}c_{t,s} \right] c_{\ell,k}' \sqrt{\sum_{i>\hat{r}_{\ell,k}} \sigma_i (\mathcal{T}^{\ell,k})^2} \\   
    & \leq \left(\sum_{\ell=1}^{L} \sum_{k=1}^{K_\ell}  
   \left[\displaystyle\prod_{(t,s)>(\ell, k)} \hspace{-3mm}c_{t,s} \right] c_{\ell,k}' \right) \| \mathcal{T}-\mathcal{T}_{\hat{\mathcal{R}}}\|_F,
\end{align}
where $\mathcal{T}_{\hat{\mathcal{R}}}$ is any best TTN approximation of $\mathcal{T}$ of TTN rank $\hat{\mathcal{R}}$ and $c_{\ell,k},c'_{\ell,k}$ are defined as
\begin{equation}
    c_{\ell,k}:= 1 + \sqrt{\frac{r_{\ell,k}}{p_{\ell,k}-1}},\qquad c'_{\ell,k}:=\sqrt{1 + \frac{r_{\ell,k}}{p_{\ell,k}-1}} \cdot \sqrt{1 + \frac{\hat{r}_{\ell,k}}{r_{\ell,k}-\hat{r}_{\ell,k}-1}}
\end{equation}
\end{thm}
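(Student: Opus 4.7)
The plan is to combine the deterministic decomposition from Lemma~\ref{lemma:sum_TTNN} with a peeling argument in expectation using Lemma~\ref{lemma:removable_projections exp value}, and finally apply the standard Gaussian generalized-Nyström expected error bound \eqref{eq: GN_error_bound_expectation} to the innermost term. First, I would take expectations on both sides of Lemma~\ref{lemma:sum_TTNN} and use linearity to push $\mathbb{E}$ inside the outer sum. Each summand has the form
$$\mathbb{E}\left\|\left(\mathop{\otimes}\limits_{j=1}^{K_L} P_{L,j}\right) \cdots \left(\mathop{\otimes}\limits_{j=1}^{K_{\ell+1}} P_{\ell+1,j}\right)\left(I \otimes (I-P_{\ell,k}) \mathop{\otimes}\limits_{j=k+1}^{K_\ell} P_{\ell,j}\right)\mathcal{T}^D\right\|_F,$$
i.e.\ a chain of projections $P_{t,s}$ with $(t,s)>(\ell,k)$ (in the lexicographic-by-level order suggested by the statement) applied to $(I-P_{\ell,k})$ and to $\mathcal{T}^D$.

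Next, I would peel the outer projections one at a time, starting from $P_{L,1}$ and proceeding inward. At each step, fix an index $(t,s)>(\ell,k)$ and, after appropriate matricization along the mode acted on by $P_{t,s}$, write the expression as $P_{t,s}\,B$ for a matrix $B$ whose entries depend only on the DRMs $\{X_{t',s'},Y_{t',s'}\}$ with $(t',s')\neq(t,s)$. Conditioning on all these other DRMs, $B$ becomes deterministic, while $X_{t,s},Y_{t,s}$ remain independent standard Gaussians by mutual independence of the sketching matrices. Applying Lemma~\ref{lemma:removable_projections exp value} to this conditional expectation, then taking the outer expectation via the tower property, produces the scalar factor
$$c_{t,s}=1+\sqrt{\tfrac{r_{t,s}}{p_{t,s}-1}},$$
and leaves the Frobenius norm of the same tensor with $P_{t,s}$ removed. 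Iterating over all $(t,s)>(\ell,k)$ in the order they appear in the Kronecker product accumulates the product $\prod_{(t,s)>(\ell,k)} c_{t,s}$, and after unitary rematricization we are left with $\mathbb{E}\|(I-P_{\ell,k})\mathcal{T}^{\ell,k}\|_F$.

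For this final expectation, observe that $(I-P_{\ell,k})\mathcal{T}^{\ell,k}$ is exactly the residual of the generalized-Nyström approximation of $\mathcal{T}^{\ell,k}$ with independent Gaussian sketches $X_{\ell,k},Y_{\ell,k}$. Hence the Gaussian bound \eqref{eq: GN_error_bound_expectation} gives
$$\mathbb{E}\|(I-P_{\ell,k})\mathcal{T}^{\ell,k}\|_F \leq c'_{\ell,k}\sqrt{\sum_{i>\hat{r}_{\ell,k}}\sigma_i(\mathcal{T}^{\ell,k})^2}.$$
Combining this with the product of $c_{t,s}$ factors and the outer sum from Lemma~\ref{lemma:sum_TTNN} yields the first inequality. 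The second inequality follows because any best TTN approximation $\mathcal{T}_{\hat{\mathcal{R}}}$ of rank $\hat{\mathcal{R}}$ has $\mathcal{T}_{\hat{\mathcal{R}}}^{\ell,k}$ of matrix rank at most $\hat{r}_{\ell,k}$, so by Eckart--Young
$$\sqrt{\sum_{i>\hat{r}_{\ell,k}}\sigma_i(\mathcal{T}^{\ell,k})^2}\leq \|\mathcal{T}^{\ell,k}-\mathcal{T}_{\hat{\mathcal{R}}}^{\ell,k}\|_F = \|\mathcal{T}-\mathcal{T}_{\hat{\mathcal{R}}}\|_F.$$

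The main obstacle is the bookkeeping of the peeling step: one must verify that, after each rematricization, the remaining factor $B$ truly depends only on the DRMs of the other nodes (so that the Gaussianity and independence of $X_{t,s},Y_{t,s}$ can be invoked inside the conditional expectation), and that the Kronecker structure of the projections lines up with the matricization so that Lemma~\ref{lemma:removable_projections exp value} applies cleanly. The condition $\hat{r}_{\ell,k}<r_{\ell,k}-1$ is needed only to ensure that $c'_{\ell,k}$ in \eqref{eq: GN_error_bound_expectation} is finite at each node. Everything else is routine arithmetic.
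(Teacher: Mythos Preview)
Your proposal is correct and follows essentially the same route as the paper: start from Lemma~\ref{lemma:sum_TTNN}, use linearity of expectation, peel off the projections $P_{t,s}$ one at a time via Lemma~\ref{lemma:removable_projections exp value} together with the tower property (conditioning on all other DRMs), and then bound the innermost residual $\mathbb{E}\|(I-P_{\ell,k})\mathcal{T}^{\ell,k}\|_F$ with the Gaussian GN estimate~\eqref{eq: GN_error_bound_expectation}. The only cosmetic difference is that for the final step $\sqrt{\sum_{i>\hat r_{\ell,k}}\sigma_i(\mathcal{T}^{\ell,k})^2}\le\|\mathcal{T}-\mathcal{T}_{\hat{\mathcal{R}}}\|_F$ the paper invokes \cite[Theorem~11.6]{book_hackbush}, whereas you spell out the Eckart--Young argument directly; the content is the same.
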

\begin{proof}
By Lemma \ref{lemma:sum_TTNN}, we have
\begin{equation*}
    \|\mathcal{T} - \widehat{\mathcal{T}}\|_F \leq \sum_{\ell=1}^L \sum_{k=1}^{K_\ell} \left\|\left(\mathop{\otimes}\limits_{j=1}^{K_L} P_{L,j}\right) \dots \left(\mathop{\otimes}\limits_{j=1}^{K_{\ell+1}} P_{\ell+1,j}\right) \left(I \otimes
  (I - P_{\ell,k}) \mathop{\otimes}\limits_{j=k+1}^{K_\ell} P_{\ell,j}\right) \mathcal{T}^D\right\|_F,
\end{equation*}
and, by linearity of the expected value, we can study each one of these terms separately.
\par
For $\ell\in\{1,\dots,L\}$ and $k\in\{1,\dots,K_\ell\}$, it holds that
\begin{equation*}
    \left\|\left(\mathop{\otimes}\limits_{j=1}^{K_L} P_{L,j}\right) \dots \left(\mathop{\otimes}\limits_{j=1}^{K_{\ell+1}} P_{\ell+1,j}\right) \left(I \otimes
  (I - P_{\ell,k}) \mathop{\otimes}\limits_{j=k+1}^{K_\ell} P_{\ell,j}\right)\mathcal{T}^D\right\|_F= \| P_{L,1} M \|_F,
\end{equation*}
where $M$ is a proper reshaping of 
\begin{equation*}
    \left(\mathop{\otimes}\limits_{j=2}^{K_L} P_{L,j}\right) \dots \left(\mathop{\otimes}\limits_{j=1}^{K_{\ell+1}} P_{\ell+1,j}\right) \left(I \otimes
  (I - P_{\ell,k}) \mathop{\otimes}\limits_{j=k+1}^{K_\ell} P_{\ell,j}\right)\mathcal{T}^D.
\end{equation*}
and applying Lemma \ref{lemma:removable_projections exp value}
, we have
\begin{equation*}
    \mathbb{E}_{X_{L,1},Y_{L,1}}\| P_{L,1} M \|_F \leq c_{L,1} \| M \|_F.
\end{equation*}
Since all DRMs are independent, we can apply the law of total expectation to obtain
\begin{align*}  &\mathbb{E}_{X_{L,1},Y_{L,1},\dots,X_{\ell,k},Y_{\ell,k}}\left\|\left(\mathop{\otimes}\limits_{j=1}^{K_L} P_{L,j}\right) \dots \left(\mathop{\otimes}\limits_{j=1}^{K_{\ell+1}} P_{\ell+1,j}\right) \left(I \otimes
  (I - P_{\ell,k}) \mathop{\otimes}\limits_{j=k+1}^{K_\ell} P_{\ell,j}\right)\mathcal{T}^D\right\|_F  \\
&\qquad \leq\mathbb{E}_{X_{L,2},Y_{L,2},\dots,X_{\ell,k},Y_{\ell,k}}\left[\mathbb{E}_{X_{L,1},Y_{L,1}}\left\|P_{L,1}M\right\|_F| X_{L,2},Y_{L,2},\dots,X_{\ell,k},Y_{\ell,k}\right] \\
&\qquad \leq c_{L,1} \mathbb{E}_{X_{L,2},Y_{L,2},\dots,X_{\ell,k},Y_{\ell,k}}\left\|M\right\|_F. \\
\end{align*}
We can now repeat the argument, by reshaping at each iteration, to reach
\begin{align}\label{eq: exp value TTN last step}
\begin{split}   \mathbb{E}&\left\|\left(\mathop{\otimes}\limits_{j=1}^{K_L} P_{L,j}\right) \dots \left(\mathop{\otimes}\limits_{j=1}^{K_{\ell+1}} P_{\ell+1,j}\right) \left(I \otimes
  (I - P_{\ell,k}) \mathop{\otimes}\limits_{j=k+1}^{K_\ell} P_{\ell,j}\right) \mathcal{T}^D\right\|_F \\
&\leq [\displaystyle\prod_{\substack{\ell+1\leq t \leq L \\
    1\leq s \leq K_t }} c_{t,s} ][\prod_{j=k+1}^{K_\ell} c_{\ell,j} ] \mathbb{E}_{X_{\ell,k},Y_{\ell,k}}\|(I-P_{\ell,k})\mathcal{T}^{\ell,k}\|_F\\
    & = 
   \left[\displaystyle\prod_{(t,s)>(\ell, k)} \hspace{-3mm}c_{t,s} \right] \mathbb{E}_{X_{\ell,k},Y_{\ell,k}}\|(I-P_{\ell,k})\mathcal{T}^{\ell,k}\|_F.
\end{split} 
\end{align}
The last expected value to compute is the one corresponding to GN error approximation for which it holds
\begin{equation*}
    \mathbb{E}_{X_{\ell,k},Y_{\ell,k}}\|(I-P_{\ell,k})\mathcal{T}^{\ell,k}\|_F \leq c'_{\ell,k} \sqrt{\sum_{i>\hat{r}_{\ell,k}} \sigma_i (\mathcal{T}^{\ell,k})^2} \leq c'_{\ell,k}  \| \mathcal{T} -\mathcal{T}_{\hat{\mathcal{R}}}\|_F.
\end{equation*}
The first inequality holds by \eqref{eq: GN_error_bound_expectation} and the second one by \cite[Theorem 11.6]{book_hackbush}.
Insert this inequality into \eqref{eq: exp value TTN last step} and combine the resulting inequality with Lemma \ref{lemma:sum_TTNN} to conclude.
\end{proof}
\subsection{Analysis of STTNN}\label{sec:sequential deterministic}
By looking at \eqref{TTNN approximant projector} and \eqref{STTNN approximant projector}, we observe that the structure of the approximations provided by TTNN and STTNN is remarkably similar. In particular, the deterministic analysis of TTNN, presented in Theorem \ref{thm:TTNN_deterministic_bound}, can be adapted to STTNN with minimal modifications. The only adjustment lies in accounting for the slightly different projections used, as they involve the contracted tensors $\mathcal{T}^{\ell, k}_{S_{\ell, k}}$, see Equation \eqref{STTNN approximant projector}. In particular, the following holds
\begin{thm}[Deterministic accuracy bound for STTNN]\label{thm:STTNN_deterministic_bound}
Let $\mathcal{T}\in\mathbb{R}^{n_1 \times\dots\times n_d} $
and $\widehat{\mathcal{T}}$ be the STTNN approximant of $\mathcal{T}$ with index tree $\mathcal{I}$, TTN ranks $\mathcal{R}$, TTN oversamples $\mathcal{P}$, sketchings $Y_{\ell, k}$ as defined in \eqref{sketch}, and sketchings $X_{\ell, k}$ with an appropriate number of rows and $r_{\ell, k}$ columns to ensure that \eqref{eq: general_seq_projection} is well-defined. Then for any TTN ranks $\hat{\mathcal{R}}$ such that $\hat{\mathcal{R}}<\mathcal{R}$, setting
\\
 \begin{itemize}
     \item $\rho_{\ell, k} := \sqrt{1 + \|\widehat{V}_{\ell, k\perp}^{T}X_{\ell,k}(\widehat{V}_{\ell, k}^T X_{\ell, k})^\dagger\|_2^2}$,\\
     \item $\tau_{\ell, k}:= \sqrt{1 + \| (Y_{\ell, k}^T Q_{\ell ,k})^\dagger Y_{\ell, k}^TQ_{\ell, k\perp}\|_2^2}$,\\
     \item $\eta_{\ell,k} = 1 + \|(Y_{\ell, k}^T Q_{\ell, k})^\dagger Y_{\ell, k}^T Q_{\ell, k}^\perp\|_2$.
     \end{itemize}
     \vspace{2mm}
where $\widehat{V}_{\ell, k}$ is an orthogonal matrix with the first $\hat{r}_{\ell, k}<r_{\ell, k}$ right singular vectors of $\mathcal{T}^{\ell, k}$ and $Q_{\ell, k} = \mathrm{orth}(\mathcal{T}^{\ell, k}_{S_{\ell,k}} X_{\ell, k})$, the following holds
 \begin{equation}\label{eq:STTNN_deterministic_bound}
 \|\mathcal{T}-\hat{\mathcal{T}}\|_F \leq \|\mathcal{T}-\mathcal{T}_{\hat{\mathcal{R}}}\|\sum_{\ell=1}^{L}\sum_{k=1}^{K_\ell} \rho_{k,\ell}\tau_{k,\ell}\left(\prod_{j=1}^{K_L} \eta_{L,j}\right) \dots \left(\prod_{j=1}^{K_{\ell+1}} \eta_{\ell+1,j}\right) \left(\prod_{j=k+1}^{K_\ell} \eta_{\ell,j}\right).
 \end{equation}
 where $\mathcal{T}_{\hat{\mathcal{R}}}$ is any best TTN approximation of $\mathcal{T}$ of TTN rank $\hat{\mathcal{R}}$.
 \end{thm}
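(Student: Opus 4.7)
The plan is to mirror the proof of Theorem~\ref{thm:TTNN_deterministic_bound}, exploiting the fact that the STTNN approximant \eqref{STTNN approximant projector} has the same algebraic form as the TTNN approximant \eqref{TTNN approximant projector} — a nested composition of oblique projectors on successive matricizations — and differs only in how each individual projector $P_{\ell, k}$ is constructed. First I would invoke Lemma~\ref{lemma:sum_TTNN} to split $\|\mathcal{T} - \widehat{\mathcal{T}}\|_F$ into a sum of $|\mathcal{I}|-1$ terms, one for each non-root node $(\ell, k)$. Its proof only uses the triangle inequality and the identity $P = I - (I - P)$ applied repeatedly, and therefore never references the explicit form of the projectors; in particular, it applies verbatim to the STTNN projectors of \eqref{eq: general_seq_projection}.

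Next, I would apply Lemma~\ref{lemma:tau_TTNN} to bound each summand by $\prod \eta_{t,s}\cdot\|(I - P_{\ell, k})\mathcal{T}^{\ell, k}\|_F$. The key observation is that Lemma~\ref{lemma:removable_projections}, on which Lemma~\ref{lemma:tau_TTNN} rests, requires only that the projector have the generic form $AX(Y^T A X)^\dagger Y^T$; the STTNN projectors satisfy this with $A = \mathcal{T}_{S_{\ell,k}}^{\ell,k}$, so the constants $\eta_{\ell, k}$ arise from $Q_{\ell, k} = \mathrm{orth}(\mathcal{T}^{\ell,k}_{S_{\ell,k}} X_{\ell, k})$, exactly as in the theorem statement.

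The main new ingredient — and the step that demands care — is bounding the residual $\|(I - P_{\ell, k})\mathcal{T}^{\ell, k}\|_F$. Unlike in TTNN, the projector $P_{\ell, k}$ uses the contracted tensor $\mathcal{T}^{\ell, k}_{S_{\ell, k}}$ rather than $\mathcal{T}^{\ell, k}$ itself. However, since all contractions in $S_{\ell, k}$ act on indices disjoint from $I_{\ell, k}$, we can write $\mathcal{T}^{\ell, k}_{S_{\ell, k}} = \mathcal{T}^{\ell, k} Z_{\ell, k}$, where $Z_{\ell, k}$ is the Kronecker product of the relevant $Y_{\ell_i, k_i}$ (suitably interleaved with identities along non-contracted modes). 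Substituting, the projector becomes
\[
    P_{\ell, k} = \mathcal{T}^{\ell, k} \widetilde X_{\ell, k}\bigl(Y_{\ell, k}^T \mathcal{T}^{\ell, k} \widetilde X_{\ell, k}\bigr)^\dagger Y_{\ell, k}^T, \qquad \widetilde X_{\ell, k} := Z_{\ell, k} X_{\ell, k},
\]
which is precisely a generalized Nyström approximation of $\mathcal{T}^{\ell, k}$ with right DRM $\widetilde X_{\ell, k}$ and left DRM $Y_{\ell, k}$. The bounds \eqref{eq: HMT_error_bound}--\eqref{eq: GN_error_bound} then yield $\|(I - P_{\ell, k})\mathcal{T}^{\ell, k}\|_F \leq \rho_{\ell, k}\tau_{\ell, k}\|\mathcal{T}^{\ell, k} - \mathcal{T}^{\ell, k}_{\hat{\mathcal{R}}}\|_F \leq \rho_{\ell, k}\tau_{\ell, k}\|\mathcal{T} - \mathcal{T}_{\hat{\mathcal{R}}}\|_F$, with $\rho_{\ell, k}$, $\tau_{\ell, k}$ as in the theorem, provided we interpret the ``$X_{\ell, k}$'' appearing in $\rho_{\ell, k}$ as the effective right sketch $\widetilde X_{\ell, k}$ (the only reading consistent with the dimension of $\widehat V_{\ell, k}$). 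Plugging the resulting bound into the summation from Lemma~\ref{lemma:sum_TTNN} produces the inequality \eqref{eq:STTNN_deterministic_bound}. The principal obstacle is exactly this last identification of the effective sketching matrix; once it is made, every other step is a direct rerun of the TTNN argument.
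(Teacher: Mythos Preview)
Your proposal is correct and follows exactly the approach the paper takes: the paper does not spell out a separate proof for Theorem~\ref{thm:STTNN_deterministic_bound} but simply remarks that the TTNN analysis ``can be adapted to STTNN with minimal modifications'' because only the projectors change, which is precisely what you do via Lemmas~\ref{lemma:sum_TTNN} and~\ref{lemma:tau_TTNN}. Your observation that $\mathcal{T}_{S_{\ell,k}}^{\ell,k} = \mathcal{T}^{\ell,k} Z_{\ell,k}$, so that $P_{\ell,k}$ is a GN projector for $\mathcal{T}^{\ell,k}$ with effective right sketch $\widetilde X_{\ell,k} = Z_{\ell,k} X_{\ell,k}$, is exactly the ``minimal modification'' the paper alludes to, and your remark that $\rho_{\ell,k}$ must be read with this effective sketch (else the product $\widehat V_{\ell,k}^T X_{\ell,k}$ is dimensionally inconsistent) makes explicit a point the paper leaves implicit.
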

Obtaining an expected error bound, even with Gaussian DRMs, is a more complicated task due to the nature of projectors. In particular, the projectors involve sketches as the one in \eqref{eq: general_seq_projection}, in which the term $\mathcal{T}_{S_{\ell, k}}^{\ell, k}X_{\ell, k}$ appears. This term can be explicitly written as
\begin{equation}
    \mathcal{T}^{\ell, k}(I \otimes Y_{\ell_s, k_s}\otimes \dots \otimes Y_{\ell_1,k_1 } \otimes I)X_{\ell, k}.
\end{equation}
where we assumed, without loss of generality, that the partition $S_{\ell, k}$ is made of consecutive indices. This structure does not allow us to apply the same approach of Theorem \ref{thm: TTN expected value error} to obtain an expected error bound.

%------------
\section{Structured sketchings for TTNN and STTNN}\label{sec:khatri-rao}
So far we presented TTNN and STTNN as algorithms for approximate nonstructured tensors in the TTN format. However, in the vast majority of applications, these tensors are already given in TTN format. The goal in such cases is to obtain a representation with lower TTN ranks. A practical example is the low-rank compression of a sum of low-rank TTN tensors.

For these applications, it is crucial to use sketching techniques that leverage the TTN structure of the tensor to reduce the computational cost, which would otherwise be prohibitive. For the TT format, for example, \cite{rounding_TT, kressner2023streaming} suggest using TT-Gaussian matrices that are DRMs in TT format with each core containing i.i.d. Gaussian entries. A natural extension of this idea would be to construct DRMs in the TTN format, made of transfer tensors with i.i.d. Gaussian entries, in such a way that the operations involved can be hierarchically split across the cores.

Similar approaches to structured sketching in the TTN format have been explored in \cite{ma2022cost, ahle2020oblivious, mahankali2022near}. Here, we focus on Khatri-Rao embeddings, which demonstrate excellent practical performance. However, in worst-case scenarios, they may require a sketch size that scales exponentially with the order of the tensor to achieve satisfactory results \cite{ahle2020oblivious}.

Given a tensor $\mathcal{T}\in \mathbb{R}^{n_1\times \dots \times n_d}$ in TTN format with index tree $\mathcal{I}$, to construct the structured sketchings we define the matrices
\phantom{a}\\
\begin{itemize}
\setlength\itemsep{1em}
    \item $X_{i}\in \mathbb{R}^{n_i\times r}$, for $i=1, \dots, d$,
    \item $Y_{i}\in \mathbb{R}^{n_i \times (r+p)}$, for $i=1, \dots, d$,
\end{itemize}
\phantom{a}\\
and then we construct the sketchings $X_{\ell, k}$ and $Y_{\ell, k}$ of the TTNN approximant as
\phantom{a}\\
\begin{itemize}
\setlength\itemsep{1em}
    \item $X_{\ell, k}= \mathop{\odot}\limits_{i\in D\backslash I_{\ell, k}}\hspace{-3mm} X_i \in \mathbb{R}^{n_{D\backslash I_{\ell, k}}\times r}$,
    \item $Y_{\ell, k}= \mathop{\odot}\limits_{i\in I_{\ell, k}}\hspace{-1mm} Y_i \in \mathbb{R}^{n_{I_{\ell, k}}\times (r+p)}$,
\end{itemize}
\phantom{a}\\
where with $\odot$ we denote the column-wise Khatri-Rao product.

Below, in Figure \ref{fig: khatri-rao sketchings}, we provide a graphical illustration of how to compute efficiently the sketchings $\Omega_{1,1}$ and $\Psi_{1,1}$, defined in section \ref{sec:TTNN}, for a 6D tensor $\mathcal{T}$ in TTN format. The tensor is structured according to the index tree depicted in Figure \ref{fig: example of index tree}. Notably, in this specific case, the involved matrices take the following form
\phantom{a}\\
\begin{itemize}
\setlength\itemsep{1em}
    \item $\Omega_{1,1} = Y_{1,1}^T\mathcal{T}^{1,1}X_{1,1}=\left(Y_1\odot Y_2 \odot Y_3\right)^T\mathcal{T}^{1,1}\left(X_4\odot X_5 \odot X_6\right)$,
    \item $\Psi_{1,1} = (Y_{2,1}^T\otimes Y_{2,2}^T)\mathcal{T}^{1,1}X_{1,1} = \left((Y_1\odot Y_2)\otimes Y_3\right)^T\mathcal{T}^{1,1}\left(X_4\odot X_5 \odot X_6\right)$
\end{itemize}
\phantom{a}\\
and entrywise they can be written as
\phantom{a}\\
\begin{itemize}
\setlength\itemsep{1em}
    \item $[\Omega_{1,1}]_{i,j} =\left(y_1^{(i)}\otimes y_2^{(i)} \otimes y_3^{(i)}\right)^T\mathcal{T}^{1,1}\left(x_4^{(j)}\otimes x_5^{(j)} \otimes x_6^{(j)}\right)$,
    \item $[\Psi_{1,1}]_{(i_1,i_2),j} =\left(y_1^{(i_1)}\otimes y_2^{(i_1)} \otimes y_3^{(i_2)}\right)^T\mathcal{T}^{1,1}\left(x_4^{(j)}\otimes x_5^{(j)} \otimes x_6^{(j)}\right)$,
\end{itemize}
\phantom{a}\\
where we used $x_s^{(k)}$ and $y_s^{(k)}$ to denote the $k$th columns of $X_s$ and $Y_s$ respectively and, with a slight abuse of notation, we have partitioned the indices of $\Psi_{1,1}$ to clarify the components involved.
\phantom{a}\\
\begin{figure}
    \centering
\begin{tikzpicture}[scale=0.7]
    % Draw the tikzcd diagram\node (I01) at (0, 0) {$I_{0,1}$};
    %\node (P1) at (0, 1.5) {$[\Omega_{1,1}]_{i,j}=[Y_{1,1}^T\mathcal{T}^{1,1}X_{1,1}]_{i,j}$};
    
    \node (FAKE) at (0, 1) {\phantom{a}};
    \node (I01) at (0, 0) {$I_{0,1}$};
    \node (I11) at (-2, -1.3) {$I_{1,1}$};
    \node (I12) at (0, -1.3) {$I_{1,2}$};
    \node (I13) at (2, -1.3) {$I_{1,3}$};
    \node (I21) at (-3, -2.6) {$I_{2,1}$};
    \node (I22) at (-1, -2.6) {$I_{2,2}$};
    \node (I23) at (1, -2.6) {$I_{2,3}$};
    \node (I24) at (3, -2.6) {$I_{2,4}$};
    \node (I31) at (-4, -3.9) {$I_{3,1}$};
    \node (I32) at (-2, -3.9) {$I_{3,2}$};

    \node (X1) at (-4, -5.5) {$y_1^{(i)}$};
    \node (X2) at (-2, -5.5) {$y_2^{(i)}$};
    \node (Y3) at (-1, -4.3) {$y_3^{(i)}$};
    \node (X4) at (0, -3) {$x_4^{(j)}$};
    \node (X5) at (1, -4.3) {$x_5^{(j)}$};
    \node (X6) at (3, -4.3) {$x_6^{(j)}$};

    % Draw arrows
    \draw[-] (I01) -- (I11);
    \draw[-] (I01) -- (I12);
    \draw[-] (I01) -- (I13);
    \draw[-] (I11) -- (I21);
    \draw[-] (I11) -- (I22);
    \draw[-] (I13) -- (I23);
    \draw[-] (I13) -- (I24);
    \draw[-] (I21) -- (I31);
    \draw[-] (I21) -- (I32);

    % Draw a circle around the node I_{0,1}
    %\draw[thick] (I22) circle (0.45cm);
    \draw[thick, fill=cyan!50] (I01) circle (0.45cm);
    \draw[thick, fill=cyan!50] (I11) circle (0.45cm);
    \draw[thick, fill=cyan!50] (I12) circle (0.45cm);
    \draw[thick, fill=cyan!50] (I13) circle (0.45cm);
    \draw[thick, fill=cyan!50] (I11) circle (0.45cm);
    \draw[thick, fill=cyan!50] (I22) circle (0.45cm);
    \draw[thick, fill=cyan!50] (I21) circle (0.45cm);
    \draw[thick, fill=cyan!50] (I23) circle (0.45cm);
    \draw[thick, fill=cyan!50] (I24) circle (0.45cm);
    \draw[thick, fill=cyan!50] (I31) circle (0.45cm);
    \draw[thick, fill=cyan!50] (I32) circle (0.45cm);
    
    \draw[thick] (X1) circle (0.5cm);
    \draw[thick] (X2) circle (0.5cm);
    \draw[thick] (Y3) circle (0.5cm);
    \draw[thick] (X4) circle (0.5cm);
    \draw[thick] (X5) circle (0.5cm);
    \draw[thick] (X6) circle (0.5cm);

    \draw[-] (I31) -- (X1);
    \draw[-] (I32) -- (X2);
    \draw[-] (I22) -- (Y3);
    \draw[-] (I12) -- (X4);
    \draw[-] (I23) -- (X5);
    \draw[-] (I24) -- (X6);
\end{tikzpicture}~\begin{tikzpicture}
    \node (DUMMY) at (0,0) {\phantom{a}};
    \node (DUMMY2) at (0,-5) {\phantom{a}};
\end{tikzpicture}~\begin{tikzpicture}[scale=0.7]
    %\node (P1) at (0, 1.5) {$[\Psi_{1,1}]_{i,j} =[(Y_{2,1}^T\otimes Y_{2,2}^T)\mathcal{T}^{1,1}X_{1,1}]_{i,j}$};
    % Draw the tikzcd diagram\node (I01) at (0, 0) {$I_{0,1}$};
    \node (FAKE) at (0, 1) {\phantom{a}};
    \node (I01) at (0, 0) {$I_{0,1}$};
    \node (I11) at (-2, -1.3) {$I_{1,1}$};
    \node (I12) at (0, -1.3) {$I_{1,2}$};
    \node (I13) at (2, -1.3) {$I_{1,3}$};
    \node (I21) at (-3, -2.6) {$I_{2,1}$};
    \node (I22) at (-1, -2.6) {$I_{2,2}$};
    \node (I23) at (1, -2.6) {$I_{2,3}$};
    \node (I24) at (3, -2.6) {$I_{2,4}$};
    \node (I31) at (-4, -3.9) {$I_{3,1}$};
    \node (I32) at (-2, -3.9) {$I_{3,2}$};

    \node (X1) at (-4, -5.5) {$y_1^{(i_1)}$};
    \node (X2) at (-2, -5.5) {$y_2^{(i_1)}$};
    \node (Y3) at (-1, -4.3) {$y_3^{(i_2)}$};
    \node (X4) at (0, -3) {$x_4^{(j)}$};
    \node (X5) at (1, -4.3) {$x_5^{(j)}$};
    \node (X6) at (3, -4.3) {$x_6^{(j)}$};

    % Draw arrows
    \draw[-] (I01) -- (I11);
    \draw[-] (I01) -- (I12);
    \draw[-] (I01) -- (I13);
    \draw[-] (I11) -- (I21);
    \draw[-] (I11) -- (I22);
    \draw[-] (I13) -- (I23);
    \draw[-] (I13) -- (I24);
    \draw[-] (I21) -- (I31);
    \draw[-] (I21) -- (I32);

    % Draw a circle around the node I_{0,1}
    %\draw[thick] (I22) circle (0.45cm);
    \draw[thick, fill=cyan!50] (I01) circle (0.45cm);
    \draw[thick, fill=cyan!50] (I11) circle (0.45cm);
    \draw[thick, fill=cyan!50] (I12) circle (0.45cm);
    \draw[thick, fill=cyan!50] (I13) circle (0.45cm);
    \draw[thick, fill=cyan!50] (I11) circle (0.45cm);
    \draw[thick, fill=cyan!50] (I22) circle (0.45cm);
    \draw[thick, fill=cyan!50] (I21) circle (0.45cm);
    \draw[thick, fill=cyan!50] (I23) circle (0.45cm);
    \draw[thick, fill=cyan!50] (I24) circle (0.45cm);
    \draw[thick, fill=cyan!50] (I31) circle (0.45cm);
    \draw[thick, fill=cyan!50] (I32) circle (0.45cm);
    
    \draw[thick] (X1) circle (0.55cm);
    \draw[thick] (X2) circle (0.55cm);
    \draw[thick] (Y3) circle (0.55cm);
    \draw[thick] (X4) circle (0.5cm);
    \draw[thick] (X5) circle (0.5cm);
    \draw[thick] (X6) circle (0.5cm);

    \draw[-] (I31) -- (X1);
    \draw[-] (I32) -- (X2);
    \draw[-] (I22) -- (Y3);
    \draw[-] (I12) -- (X4);
    \draw[-] (I23) -- (X5);
    \draw[-] (I24) -- (X6);
\end{tikzpicture}
    \caption{Graphical illustration of how to use Khatri-Rao embeddings to compute the entries of $\Omega_{1,1}$ (left) and $\Psi_{1,1}$ (right) exploiting the tree structure. }
    \label{fig: khatri-rao sketchings}
\end{figure}
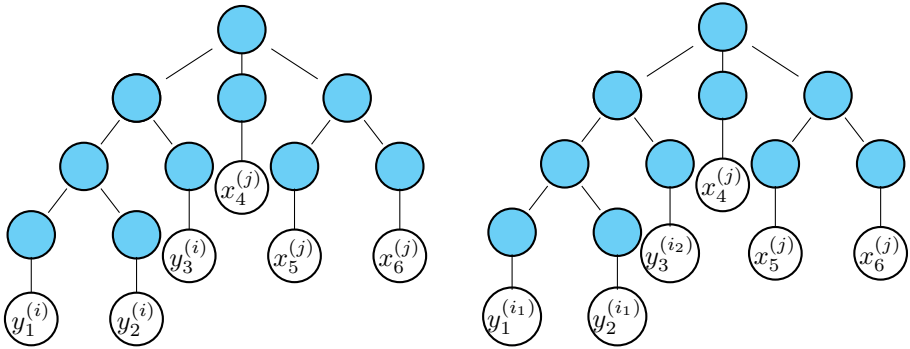
Note that, with these DRMs, the cost of computing a transfer tensor is linear in the $n_i$s. Moreover, many of the contractions can be recycled to calculate the other transfer tensors.

\section{Numerical experiments}\label{sec:experiments}
In this section, we evaluate the performance and accuracy of the TTNN and STTNN algorithms. First, we demonstrate their effectiveness by computing TTN approximations of dense tensors. Subsequently, we shift our focus to the recompression and rounding of tensors already represented in the TTN format. For these analyses, we will consider the index tree in Figure \ref{fig: example of index tree}. Since the tree represents a 6-dimensional object, we cannot address tensors with large mode sizes in the dense case. Thus, we focus on larger tensors only in the second part of the analysis. All numerical experiments were performed in MATLAB version 2023b on a laptop
with 16GB of system memory and the code used for the numerical experiments is available at \href{https://github.com/alb95/TTNN}{https://github.com/alb95/TTNN}.
As a first example, we analyze the accuracy of the TTNN and the STTNN algorithms on a 6D Hilbert tensor $\mathcal{H}$, i.e. $\mathcal{H}(i_1, \dots, i_6) = \frac{1}{1+i_1 + \dots + i_6}$ with mode sizes $20$. In particular, we compute the relative error of approximation in the Frobenius norm by varying the TTN ranks $\mathcal{R}$ and fixing the oversampling parameters $\mathcal{P}$ of the approximant.
For simplicity, the TTN ranks involved in the approximations are set to the same value $r$ and the oversampling parameters to $p=3$. 
Additionally, to provide a benchmark for comparison, we include the approximation error provided by adapting the hierarchically SVD \cite{grasedyck2010hierarchical} to the TTN setting (TTN-SVD). We also include a randomized variant of the TTN-SVD, referred to as TTN-HMT, where the standard SVDs are replaced by the HMT algorithm. A similar approach for the TT format is presented in \cite{kressner2023streaming, huber2017randomized}. Since some methods involve randomness, we perform 30 trials and show the mean relative error $\| \mathcal{T}-\hat{\mathcal{T}}\|_F/ \| \mathcal{T}\|_F$ as well as average running time. For consistency, we use the same right sketch matrices $X_{\ell, k}$ in both TTNN and TTN-HMT methods. The results of this experiment are reported in Figure~\ref{fig:hilbert_toy_example}.
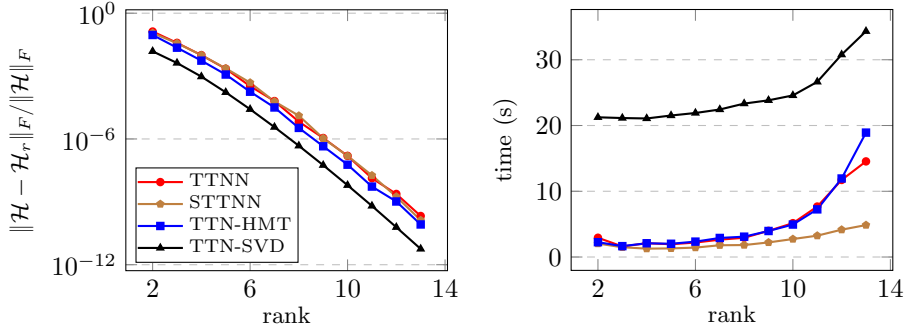
\begin{figure}
    \centering
    \begin{tikzpicture}
\begin{semilogyaxis}[
    x label style={at={(axis description cs:0.5,-0.1)},anchor=north},
    xlabel={\small{rank}},
    ylabel={\small{$\|\mathcal{H}-\mathcal{H}_r\|_F/\|\mathcal{H}\|_F$}},
    legend pos=north east,
    xtick = {2,6, 10, 14},
    width=.45\linewidth,
    ymajorgrids=true,
    grid style=dashed,
    legend pos = south west,
    mark size = 1.2pt,
    legend style={font=\scriptsize, cells={anchor=west}, inner sep=0.3pt,row sep=-2pt}, 
    restrict x to domain=0:13
]
    \addplot [color = red, style = thick,mark =*,  mark size=1.2pt] table [col sep=comma, x index = 0, y index = 1] {TTNN_data/6D_Hilbert_avaraged_accuracy.csv};
    \addplot [color = brown, style = thick,mark =pentagon*,  mark size=1.2pt] table [col sep=comma, x index = 0, y index = 2] {TTNN_data/6D_Hilbert_avaraged_accuracy.csv};
    \addplot [color = blue, style = thick,mark =square*,  mark size=1.2pt] table [col sep=comma, x index = 0, y index = 4] {TTNN_data/6D_Hilbert_avaraged_accuracy.csv};
    \addplot  [color = black, style = thick,mark =triangle*,  mark size=1.2pt] table [col sep=comma, x index = 0, y index = 3] {TTNN_data/6D_Hilbert_avaraged_accuracy.csv};
    \legend{\scriptsize{TTNN},\scriptsize{STTNN}, \scriptsize{TTN-HMT}, \scriptsize{TTN-SVD}}
    %\addplot [color = SkyBlue, mark = *, mark size= 1.6pt] table [col sep=comma, x index = 0, y index = 5] {E_oversampling_linear.txt};
    %\addplot [color = RoyalPurple, style = thick] table [col sep=comma, x index = 0, y index = 1] {E_oversampling_linear.txt};
    %\node[anchor=west, rotate=0] at (axis cs:-1,3e-2) {\small{$ \sigma_i=1/i$}};
\end{semilogyaxis}
\end{tikzpicture}~   \begin{tikzpicture}
\begin{axis}[
    x label style={at={(axis description cs:0.5,-0.1)},anchor=north},
    xlabel={\small{rank}},
    ylabel={\small{time (s)}},
    legend pos=north east,
    xtick = {2,6, 10, 14},
    width=.45\linewidth,
    ymajorgrids=true,
    grid style=dashed,
    legend pos = south west,
    mark size = 1.2pt,
    legend style={font=\scriptsize, cells={anchor=west}, inner sep=0.3pt,row sep=-2pt}, 
    restrict x to domain=0:13
]
    \addplot [color = red, style = thick,mark =*,  mark size=1.2pt] table [col sep=comma, x index = 0, y index = 1] {TTNN_data/6D_Hilbert_avaraged_times.csv};
    \addplot [color = brown, style = thick,mark =pentagon*,  mark size=1.2pt] table [col sep=comma, x index = 0, y index = 2] {TTNN_data/6D_Hilbert_avaraged_times.csv};
    \addplot [color = blue, style = thick,mark =square*,  mark size=1.2pt] table [col sep=comma, x index = 0, y index = 4] {TTNN_data/6D_Hilbert_avaraged_times.csv};
    \addplot  [color = black, style = thick,mark =triangle*,  mark size=1.2pt] table [col sep=comma, x index = 0, y index = 3] {TTNN_data/6D_Hilbert_avaraged_times.csv};
    %\legend{\scriptsize{TTNN},\scriptsize{STTNN}, \scriptsize{TTN-HMT}, \scriptsize{TTN-SVD}}
    %\addplot [color = SkyBlue, mark = *, mark size= 1.6pt] table [col sep=comma, x index = 0, y index = 5] {E_oversampling_linear.txt};
    %\addplot [color = RoyalPurple, style = thick] table [col sep=comma, x index = 0, y index = 1] {E_oversampling_linear.txt};
    %\node[anchor=west, rotate=0] at (axis cs:-1,3e-2) {\small{$ \sigma_i=1/i$}};
\end{axis}
\end{tikzpicture}
    \caption{Frobenius error of approximation (left) and running time (right) obtained by the TTNN algorithm with the index tree in Figure \ref{fig: example of index tree} and Gaussian sketchings for different values of $r$ on the 6D Hilbert tensor of size $20\times 20 \times 20 \times 20 \times 20 \times 20$.}
    \label{fig:hilbert_toy_example}
\end{figure}
The experiment demonstrates that all the randomized algorithms perform very well. Both TTNN and STTNN achieve comparable levels of accuracy, while TTN-HMT proves to be slightly more accurate. This outcome aligns with our expectations and is consistent with observations in the matrix case. However, the sequential variant outperforms the others by a significant margin in terms of runtime, establishing itself as a practical and efficient method for tree tensor network compression in both streaming and non-streaming settings.

In the second experiment, we evaluate the performance of the TTNN method for TTN rounding of a tensor and compare it once again with TTN-SVD and TTN-HMT, both appropriately adapted to the TTN format. We exclude STTNN from this comparison, as its computational advantage primarily stems from iterating on smaller tensors. However, since these tensors are already in a compressed format, this approach does not offer significant benefits in this context.
In the second experiment, we evaluate the performance of the TTNN method for TTN rounding of a tensor and compare it with TTN-SVD and TTN-HMT, both appropriately adapted to the TTN format. We exclude STTNN from this comparison, as its computational advantage primarily arises from iterating on smaller tensors. However, since these tensors are already in a compressed format, this approach does not yield significant benefits in this scenario.

The experiment involves recompressing synthetic tensors provided in TTN format. The internal core tensors are generated using an orthogonal CP decomposition. Specifically, each core tensor is constructed as a superdiagonal tensor (i.e., a tensor with non-zero entries only along the diagonal) with entries $\sigma_i$ that follow a prescribed decay. These core tensors are then multiplied by a set of Haar-distributed orthogonal matrices along each mode. This process ensures that the TTN cores retain a structured form, where the $\sigma_i$ values control the magnitude of the components. The leaf matrices are instead Haar-distributed orthogonal matrices.
The mode sizes of these tensors are set to $500$, and the TTN ranks are fixed at $70$. We analyze three different decay patterns for the $\sigma_i$ values: quadratic ($\sigma_i = 1/i^2$), cubic ($\sigma_i = 1/i^3$), and exponential ($\sigma_i = 1/2^i$). We compute the relative error of approximation in the Frobenius norm and the running time by varying the TTN ranks $\mathcal{R}$ and fixing the oversampling parameters $\mathcal{P}$. 
The TTN ranks involved in the approximations are set to the same value $r$ and the oversampling parameters to $p=10$. 
In the experiments, presented in Figure \ref{fig: TTN rounding}, we report the averaged quantities over 30 attempts. 
%--------------
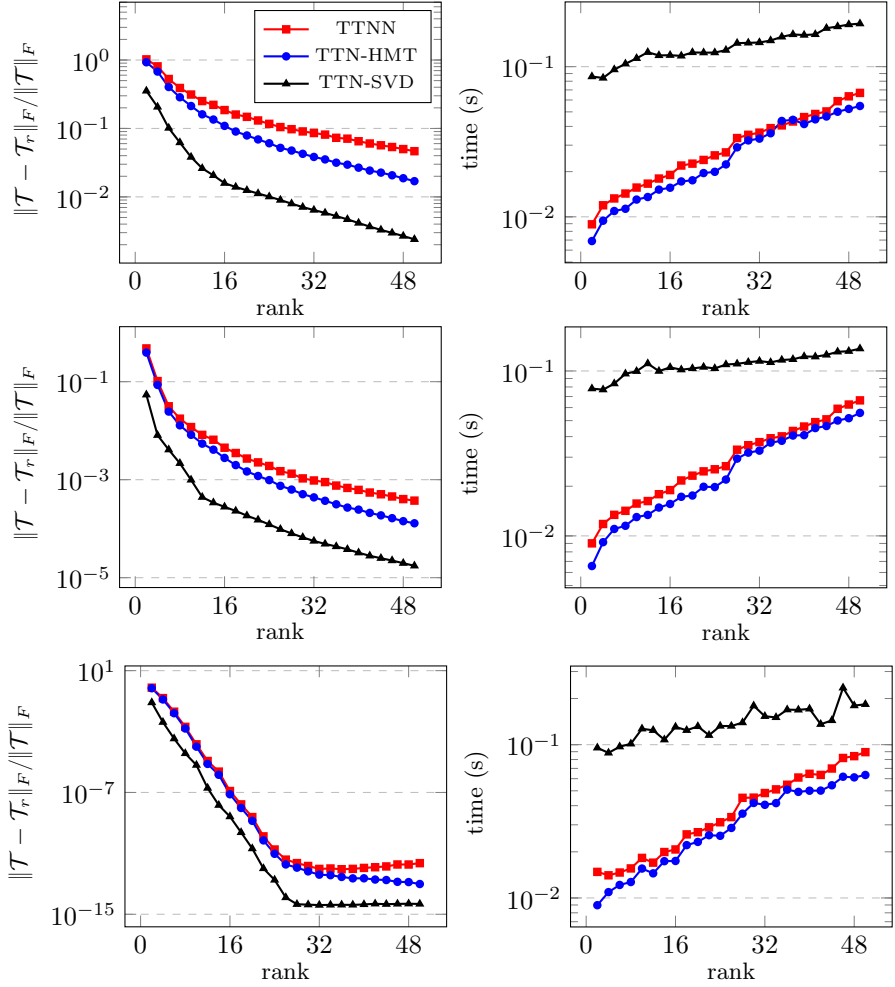
\begin{figure}
\centering
\begin{tikzpicture}
\begin{semilogyaxis}[
    x label style={at={(axis description cs:0.5,-0.1)},anchor=north},
    xlabel={\small{rank}},
    ylabel={\small{ $\|\mathcal{T}-\mathcal{T}_r\|_F/\|\mathcal{T}\|_F$}},
    %legend pos=north west,
    ymajorgrids=true,
    grid style=dashed,
    ymax = 7,
    xtick = {0,16,32,48},
    width=.45\linewidth,
    legend pos = north east,
    mark size = 1.2pt,
    restrict x to domain=0:125
]
\hspace*{0.5cm}
\addplot [color = red, style = thick,mark = square*,  mark size=1.2pt] table [col sep=comma, x index = 0, y index = 1,x filter/.code={\pgfmathparse{mod(\coordindex,50)==0 ? x : inf}}] {TTNN_data/toy_example_avarage_accuracy_quadratic.csv};
\addplot [color = blue, style = thick,mark = *,  mark size=1.2pt] table [col sep=comma, x index = 0, y index = 2,x filter/.code={\pgfmathparse{mod(\coordindex,50)==0 ? x : inf}}] {TTNN_data/toy_example_avarage_accuracy_quadratic.csv};
\addplot [color = black, style = thick,mark =triangle*,  mark size=1.2pt] table [col sep=comma, x index = 0, y index = 3,x filter/.code={\pgfmathparse{mod(\coordindex,50)==0 ? x : inf}}] {TTNN_data/toy_example_avarage_accuracy_quadratic.csv};
\legend{\scriptsize{TTNN}, \scriptsize{TTN-HMT}, \scriptsize{TTN-SVD}}
\end{semilogyaxis}
\end{tikzpicture}~\begin{tikzpicture}
\begin{semilogyaxis}[
    x label style={at={(axis description cs:0.5,-0.1)},anchor=north},
    xlabel={\small{rank}},
    ylabel={\small{ time (s)}},
    %legend pos=north west,
    ymajorgrids=true,
    grid style=dashed,
    xtick = {0,16,32,48},
    width=.45\linewidth,
    %legend pos = north west,
    mark size = 1.2pt,
    restrict x to domain=0:125
]
\hspace*{0.5cm}
\addplot [color = red, style = thick,mark = square*,  mark size=1.2pt] table [col sep=comma, x index = 0, y index = 1,x filter/.code={\pgfmathparse{mod(\coordindex,50)==0 ? x : inf}}] {TTNN_data/toy_example_avarage_times_quadratic.csv};
\addplot [color = blue, style = thick,mark = *,  mark size=1.2pt] table [col sep=comma, x index = 0, y index = 2,x filter/.code={\pgfmathparse{mod(\coordindex,50)==0 ? x : inf}}] {TTNN_data/toy_example_avarage_times_quadratic.csv};
\addplot [color = black, style = thick,mark =triangle*,  mark size=1.2pt] table [col sep=comma, x index = 0, y index = 3,x filter/.code={\pgfmathparse{mod(\coordindex,50)==0 ? x : inf}}] {TTNN_data/toy_example_avarage_times_quadratic.csv};
\end{semilogyaxis}
\end{tikzpicture}\\
\begin{tikzpicture}
\begin{semilogyaxis}[
    x label style={at={(axis description cs:0.5,-0.1)},anchor=north},
    xlabel={\small{rank}},
    ylabel={\small{ $\|\mathcal{T}-\mathcal{T}_r\|_F/\|\mathcal{T}\|_F$}},
    %legend pos=north west,
    ymajorgrids=true,
    grid style=dashed,
    xtick = {0,16,32,48},
    width=.45\linewidth,
    %legend pos = north west,
    mark size = 1.2pt,
    restrict x to domain=0:125
]
\hspace*{0.5cm}
\addplot [color = red, style = thick,mark = square*,  mark size=1.2pt] table [col sep=comma, x index = 0, y index = 1,x filter/.code={\pgfmathparse{mod(\coordindex,50)==0 ? x : inf}}] {TTNN_data/toy_example_avarage_accuracy_cubic.csv};
\addplot [color = blue, style = thick,mark = *,  mark size=1.2pt] table [col sep=comma, x index = 0, y index = 2,x filter/.code={\pgfmathparse{mod(\coordindex,50)==0 ? x : inf}}] {TTNN_data/toy_example_avarage_accuracy_cubic.csv};
\addplot [color = black, style = thick,mark =triangle*,  mark size=1.2pt] table [col sep=comma, x index = 0, y index = 3,x filter/.code={\pgfmathparse{mod(\coordindex,50)==0 ? x : inf}}] {TTNN_data/toy_example_avarage_accuracy_cubic.csv};
\end{semilogyaxis}
\end{tikzpicture}~\begin{tikzpicture}
\begin{semilogyaxis}[
    x label style={at={(axis description cs:0.5,-0.1)},anchor=north},
    xlabel={\small{rank}},
    ylabel={\small{ time (s)}},
    %legend pos=north west,
    ymajorgrids=true,
    grid style=dashed,
    xtick = {0,16,32,48},
    width=.45\linewidth,
    %legend pos = north west,
    mark size = 1.2pt,
    restrict x to domain=0:125
]
\hspace*{0.5cm}
\addplot [color = red, style = thick,mark = square*,  mark size=1.2pt] table [col sep=comma, x index = 0, y index = 1,x filter/.code={\pgfmathparse{mod(\coordindex,50)==0 ? x : inf}}] {TTNN_data/toy_example_avarage_times_cubic.csv};
\addplot [color = blue, style = thick,mark = *,  mark size=1.2pt] table [col sep=comma, x index = 0, y index = 2,x filter/.code={\pgfmathparse{mod(\coordindex,50)==0 ? x : inf}}] {TTNN_data/toy_example_avarage_times_cubic.csv};
\addplot [color = black, style = thick,mark =triangle*,  mark size=1.2pt] table [col sep=comma, x index = 0, y index = 3,x filter/.code={\pgfmathparse{mod(\coordindex,50)==0 ? x : inf}}] {TTNN_data/toy_example_avarage_times_cubic.csv};
\end{semilogyaxis}
\end{tikzpicture}\\
\begin{tikzpicture}
\begin{semilogyaxis}[
    x label style={at={(axis description cs:0.5,-0.1)},anchor=north},
    xlabel={\small{rank}},
    ylabel={\small{ $\|\mathcal{T}-\mathcal{T}_r\|_F/\|\mathcal{T}\|_F$}},
    %legend pos=north west,
    ymajorgrids=true,
    xtick = {0,16,32,48},
    grid style=dashed,
    width=.45\linewidth,
    %legend pos = north west,
    mark size = 1.2pt,
    restrict x to domain=0:125
]
\hspace*{0.5cm}
\addplot [color = red, style = thick,mark = square*,  mark size=1.2pt] table [col sep=comma, x index = 0, y index = 1,x filter/.code={\pgfmathparse{mod(\coordindex,50)==0 ? x : inf}}] {TTNN_data/toy_example_avarage_accuracy_exp.csv};
\addplot [color = blue, style = thick,mark = *,  mark size=1.2pt] table [col sep=comma, x index = 0, y index = 2,x filter/.code={\pgfmathparse{mod(\coordindex,50)==0 ? x : inf}}] {TTNN_data/toy_example_avarage_accuracy_exp.csv};
\addplot [color = black, style = thick,mark =triangle*,  mark size=1.2pt] table [col sep=comma, x index = 0, y index = 3,x filter/.code={\pgfmathparse{mod(\coordindex,50)==0 ? x : inf}}] {TTNN_data/toy_example_avarage_accuracy_exp.csv};
\end{semilogyaxis}
\end{tikzpicture}~\begin{tikzpicture}
\begin{semilogyaxis}[
    x label style={at={(axis description cs:0.5,-0.1)},anchor=north},
    xlabel={\small{rank}},
    ylabel={\small{ time (s)}},
    %legend pos=north west,
    ymajorgrids=true,
    xtick = {0,16,32,48},
    grid style=dashed,
    width=.45\linewidth,
    %legend pos = north west,
    mark size = 1.2pt,
    restrict x to domain=0:125
]
\hspace*{0.5cm}
\addplot [color = red, style = thick,mark = square*,  mark size=1.2pt] table [col sep=comma, x index = 0, y index = 1,x filter/.code={\pgfmathparse{mod(\coordindex,50)==0 ? x : inf}}] {TTNN_data/toy_example_avarage_times_exp.csv};
\addplot [color = blue, style = thick,mark = *,  mark size=1.2pt] table [col sep=comma, x index = 0, y index = 2,x filter/.code={\pgfmathparse{mod(\coordindex,50)==0 ? x : inf}}] {TTNN_data/toy_example_avarage_times_exp.csv};
\addplot [color = black, style = thick,mark =triangle*,  mark size=1.2pt] table [col sep=comma, x index = 0, y index = 3,x filter/.code={\pgfmathparse{mod(\coordindex,50)==0 ? x : inf}}] {TTNN_data/toy_example_avarage_times_exp.csv};
\end{semilogyaxis}
\end{tikzpicture}\\
 \caption{Frobenius error of approximation obtained by the TTNN algorithm with the index tree in Figure \ref{fig: example of index tree} and Khatri-Rao sketchings for different values of $r$ on 6D synthetic tensors of mode size $500$ with different decays: quadratic, cubic and exponential.} 
    \label{fig: TTN rounding}
\end{figure}
Consistent with the results obtained for dense tensors, we observe that, in terms of accuracy, TTNN performs comparably to TTN-HMT, with a relatively constant gap when compared to TTN-SVD.
In terms of runtime, the two randomized algorithms significantly outperform TTN-SVD. Furthermore, we demonstrate that satisfactory accuracy can be achieved using Khatri-Rao embeddings, despite the absence of robust theoretical guarantees.

\section{Conclusions}\label{sec:conclusions}This paper introduced the tree tensor network Nyström (TTNN), a streamable method for the low-rank approximation of a tensor in any tree tensor network format. This method extends existing algorithms based on generalized Nyström, in particular by choosing the proper tree structure it is possible to retrieve the multilinear Nyström \cite{bucci2024multilinear} for the Tucker format and the streaming tensor train approximation \cite{kressner2023streaming} for the tensor train format. The TTNN algorithm preserves the key features of the generalized Nyström for matrices, i.e. it is streamable, randomized, single-pass, and cost-effective.
These properties hold since TTNN avoids the costly orthogonalizations of the hierarchical SVD (and of its randomized version) by computing an approximation based on two-side sketches that allow efficient updates of the approximation after linear updates of the tensor.
\par
We provided accuracy guarantees on the method by proving a deterministic error bound that holds for all dimension reduction matrices (DRMs). While obtaining an error bound in expected value is generally a tough task, the literature contains a wealth of information on standard Gaussian matrices that allowed us to provide an expected error bound for standard Gaussian DRMs.
\par
This paper also introduced the sequential tree tensor network Nystrom (STTNN) approximant, a sequential variant of the TTNN. Sequentiality allows the manipulation of progressively smaller tensors while computing the approximant and is particularly advantageous in the approximation of dense tensors. We have not been able to provide an error bound in expected value due to the complicated structure of the sketches, but it would be another possible avenue of research. Furthermore, we showed how these methods can be applied to the rounding of a tensor given in tree tensor network format. 
\par
All our error bounds are supported by experiments showing the efficiency of TTNN and STTNN. In particular, our methods achieve similar but slightly worse accuracy than the TTN-SVD. This loss of accuracy is compensated by the computational gain given by parallelizability and streamability. Notably, the streamability property has been pivotal in developing the randomized sketched TT-GMRES algorithm \cite{bucci2024randomized} through STTA—a randomized adaptation of the classic TT-GMRES algorithm \cite{dolgov2013tt} for solving linear systems in TT format. An intriguing direction for future research would be to extend these techniques, using TTNN, to develop efficient randomized solvers for linear systems in more general tree tensor formats.
\par
Although the proposed algorithms allow for the approximation of a tensor with any acyclic tensor diagram, certain applications in quantum mechanics benefit from approximations in the tensor ring and MERA formats. These formats are cyclic tensor networks and adapting our methods to these formats could be an interesting challenge.
This work is hence another step towards the extension of streamable algorithms to more complicated tensor networks and could unlock new applications in tensor-based computations.  

\section*{Acknowledgement}
We would like to thank Leonardo Robol and Bart Vandereycken for helpful discussions and pointers to the literature.
\bibliographystyle{unsrt}
\bibliography{references}

\end{document}